\documentclass[11pt]{article}
\usepackage[pagebackref,linktocpage=true,colorlinks=true,linkcolor=Blue,citecolor=BrickRed,urlcolor=RoyalBlue]{hyperref}
\usepackage[alphabetic,msc-links,abbrev]{amsrefs} 

\usepackage[a4paper,margin=1in]{geometry}
\usepackage[UKenglish]{babel}
\usepackage[T1]{fontenc}
\usepackage{lmodern}
\usepackage{microtype}

\usepackage{authblk}

\usepackage{amsmath,amssymb,amsthm,mathtools}
\usepackage{mathrsfs}
\usepackage{graphicx}
\usepackage{tikz}
\usepackage{pgfplots}
\usepackage{hyperref}
\usepackage[nameinlink,noabbrev]{cleveref}

\pgfplotsset{compat=1.18}
\usetikzlibrary{arrows.meta}

\hypersetup{
  colorlinks=true,
  linkcolor=blue,
  citecolor=blue,
  urlcolor=blue
}

\newtheorem{theorem}{Theorem}[section]
\newtheorem{proposition}[theorem]{Proposition}
\newtheorem{lemma}[theorem]{Lemma}
\newtheorem{corollary}[theorem]{Corollary}
\theoremstyle{definition}
\newtheorem{definition}[theorem]{Definition}
\theoremstyle{remark}
\newtheorem{remark}[theorem]{Remark}

\newcommand{\R}{\mathbb R}
\newcommand{\T}{\mathbb T}
\newcommand{\Z}{\mathbb Z}
\newcommand{\pv}{\operatorname{p.v.}}
\newcommand{\sign}{\operatorname{sign}}

\title{On the weak formulation of Prandtl's minimum drag problem
}
\author{Aram L. Karakhanyan}
\author{Yigit Katgi}
\affil{School of Mathematics, The University of Edinburgh, Peter Tait Guthrie Road, EH9 3FD, Edinburgh, UK}
\date{}

\begin{document}

\maketitle
\footnote{Corresponding author's email: aram6k@gmail.com}
\footnote{Keywords: Pseudo-differential equations, Sobolev spaces, Prandtl's induced drag formula, variational problem.}
\footnote{AMS Classification: 76G25, 49J27, 76N25}
\begin{abstract}
We study Prandtl's classical problem  on  minimising the induced drag  for a finite wing with fixed span.
The induced drag is given by a singular quadratic functional of the circulation, with admissible functions  satisfying the 
prescribed lift and second-moment conditions. We formulate the problem in the fractional
Sobolev space \(H^{1/2}\), which is the natural energy space for the  functional, prove existence and uniqueness of minimisers by variational methods,
and derive the corresponding Euler--Lagrange equation. 
Passing to a periodic formulation on the
one-dimensional torus, we identify the drag functional with the quadratic form of the
half-Laplacian and solve the resulting singular integral
equation explicitly and recover Prandtl's bell-shaped circulation profile.
\end{abstract}

\section{Introduction}

In Prandtl's lifting-line theory a finite wing is described by a spanwise circulation
distribution
\[
\Gamma:(-1,1)\to\R.
\]
The induced drag is expressed as 
\[
J(\Gamma)=
\frac{1}{4\pi}\pv\!\int_{-1}^1\!\int_{-1}^1
\frac{\Gamma(x)\Gamma'(y)}{x-y}\,dx\,dy,
\]
which can be derived from the  Biot--Savart law. In the classical treatment, one considers smooth circulation laws
and manipulates the principal value integral directly. If one  formally derived the Euler--Lagrange
equations then by solving them one obtains the family of profiles
\[
\Gamma(x)=(a+bx^2)\sqrt{1-x^2},
\]
which play a distinguished role in aerodynamic design.


This solution can be found in the classical paper Prandtl \cite{Prandtl1933}. 
Before Prandtl's work  Munk \cite{Munk} considered a similar problem with the only constraint being the prescribed flux momentum and pointwise boundary conditions imposed on the admissible functions.
His main result states that in order to achieve minimal drag the downwash velocity must be constant. 
Note that for $H^{1/2}$ the trace operator is not defined, hence in the natural energy space, which as it turns out is the Sobolev space $H^{1/2}$, no boundary condition can be imposed on admissible functions.

In a more recent treatment of the problem, Ozanski used Chebyshev polynomials in some weighted $L^2$ space with hyperbolic metric and a relaxed constraint on the  second moment to give an existence proof \cite{Ozanski}, where 
the second moment condition is replaced with a weaker constraint. 

For some  numerical results  see \cite{Panaro}, where the authors observe that it is enough to minimise the functional  over the class of the solutions to the Euler Lagrange equations. They do not impose the second moment constraint.

The main difficulty the authors deal with is to ensure that suitable perturbations of stationary points
still satisfy the constraints and belong to the admissible class $A$. Thus, choosing the natural class $A$ is the key to solve the problem. Note that in aerodynamic literature \cite{Phillips} the formula $J(\Gamma)= \sum_{k\in \mathbb Z} k |a_k|^2$ is obtained through a formal computation, modulo a constant multiplier,  if $\Gamma=\sum_{k\in \mathbb Z} a_ke^{ikx}$. However, it seems that no rigorous mathematical proof of the existence of a minimiser has been given for the minimal induced drag problem.

The purpose of this paper is to give a rigorous variational treatment of this problem in a
fractional Sobolev setting. More precisely, we give a weak formulation of  the induced drag problem on \(H^{1/2}\),
impose the lift and second-moment constraints as continuous linear conditions, and prove
existence and uniqueness of a minimiser by the direct method of calculus of variations \cite{Evans}. We then derive the weak
Euler--Lagrange equation and show that the minimiser satisfies a classical singular integral
equation on \((-1,1)\). Solving this equation by means of Tricomi's inversion formula yields the
explicit structure of the minimiser.

A key step is to pass to a periodic formulation on the one-dimensional torus
\[
\T=\R/2\pi\Z.
\]
In this setting the Biot--Savart operator becomes a Fourier multiplier with symbol \(|k|\), and
the induced drag functional takes the form
\[
J(\Gamma)=\langle (-\Delta)^{1/2}\Gamma,\Gamma\rangle.
\]
This gives access to standard Hilbert space methods while preserving the structure of Prandtl's
original problem.

The problem has a nonlocal character due to the singular kernel. However, the Fourier
representation of the energy reveals that the natural space is \(H^{1/2}\), and in this
framework the minimisation problem becomes both well posed and amenable to analysis. The gain
from the torus formulation is therefore technical rather than conceptual: it furnishes a clean
spectral description of the energy, after which the original interval problem can be recovered
by rescaling.

\medskip

\noindent\textbf{Main results.}
The first result gives existence and uniqueness of a minimiser.

\begin{theorem}\label{thm:exist-unique}
Let \(L_0,M_0\in\R\). Then there exists a unique minimiser
\(\Gamma_\ast\in\mathcal A\) of the functional
\[
J(\Gamma)=\langle (-\Delta)^{1/2}\Gamma,\Gamma\rangle
\]
over the admissible class
\[
\mathcal A
:=
\Bigl\{
\Gamma\in H^{1/2}(\T):
\int_{-\pi}^{\pi}\Gamma(x)\,dx=L_0,\ 
\int_{-\pi}^{\pi}x^2\Gamma(x)\,dx=M_0
\Bigr\}.
\]
\end{theorem}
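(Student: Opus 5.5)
The plan is to treat the problem as a minimum-norm problem on a closed affine subspace of a Hilbert space. I work with the Fourier characterisation $\Gamma=\sum_{k\in\Z}a_k e^{ikx}$, for which
\[
J(\Gamma)=2\pi\sum_{k\in\Z}|k|\,|a_k|^2,\qquad \|\Gamma\|_{H^{1/2}}^2\simeq\sum_{k\in\Z}(1+|k|)|a_k|^2 .
\]
The difficulty is that $J$ is only a seminorm, since it does not see the mode $k=0$. The lift constraint repairs this: $\int_{-\pi}^{\pi}\Gamma\,dx=2\pi a_0=L_0$, so $a_0$ is the same for every $\Gamma\in\mathcal A$. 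Hence on $\mathcal A$,
\[
J(\Gamma)+2\pi\Bigl|\tfrac{L_0}{2\pi}\Bigr|^2=2\pi\sum_{k}\max(|k|,1)\,|a_k|^2,
\]
and the right-hand side is $2\pi$ times the square of an equivalent norm $\|\cdot\|_\ast$ on $H^{1/2}(\T)$. Minimising $J$ over $\mathcal A$ is therefore the same as minimising $\|\Gamma\|_\ast$ over $\mathcal A$.

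\textbf{Step 1: $\mathcal A$ is a nonempty closed affine subspace.}
Write $\ell_1(\Gamma)=\int_{-\pi}^{\pi}\Gamma\,dx$ and $\ell_2(\Gamma)=\int_{-\pi}^{\pi}x^2\Gamma\,dx$. Both are bounded on $L^2(\T)\supset H^{1/2}(\T)$, because $1$ and $x^2$ are bounded on $[-\pi,\pi]$. Hence $\mathcal A=\ell_1^{-1}(L_0)\cap\ell_2^{-1}(M_0)$ is closed and convex.

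For nonemptiness I seek $\Gamma=\alpha+\beta x^2$, where $x^2$ means its $2\pi$-periodic extension. This extension is continuous and piecewise smooth, so it lies in $H^1(\T)\subset H^{1/2}(\T)$. The two constraints become a $2\times 2$ linear system whose matrix is the Gram matrix of $1$ and $x^2$ in $L^2(-\pi,\pi)$. By strict Cauchy--Schwarz (the two functions are not proportional) its determinant is nonzero, so the system is solvable for every $(L_0,M_0)$.

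\textbf{Step 2: existence.}
Since $\mathcal A$ is a nonempty closed convex subset of the Hilbert space $(H^{1/2}(\T),\|\cdot\|_\ast)$, the projection theorem gives a unique element of minimal $\|\cdot\|_\ast$-norm. Equivalently, one can run the direct method:
\begin{itemize}
\item a minimising sequence is bounded in $H^{1/2}$ by the identity above;
\item a subsequence converges weakly in $H^{1/2}$;
\item $\mathcal A$ is weakly closed, being convex and closed;
\item $J$ is convex and continuous, hence weakly lower semicontinuous.
\end{itemize}

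\textbf{Step 3: uniqueness.}
Suppose $\Gamma_1,\Gamma_2\in\mathcal A$ are both minimisers. Their difference $\phi=\Gamma_1-\Gamma_2$ has zero mean. The parallelogram identity for the quadratic form $J$ gives
\[
J\Bigl(\tfrac{\Gamma_1+\Gamma_2}{2}\Bigr)=\tfrac12J(\Gamma_1)+\tfrac12J(\Gamma_2)-\tfrac14J(\phi).
\]
The midpoint lies in $\mathcal A$ by convexity, so minimality forces $J(\phi)=0$. Then $a_k(\phi)=0$ for all $k\neq0$. Since also $a_0(\phi)=0$, we get $\phi=0$.

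The only genuinely delicate point is the degeneracy of $J$ on constants, which is handled by the fact that the lift constraint fixes $a_0$. The nonemptiness of $\mathcal A$ is the other check, and I expect it to be straightforward as in Step 1.
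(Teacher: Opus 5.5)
Your proof is correct, and its key observation is the same as the paper's: the lift constraint fixes the zero Fourier mode $\hat\Gamma(0)=L_0/(2\pi)$, which is exactly the direction $J$ does not control. You differ in how you use it.

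You build the fixed zero mode into an equivalent Hilbert norm, so that on $\mathcal A$ the energy equals $2\pi\|\Gamma\|_\ast^2-L_0^2/(2\pi)$. The projection theorem then gives existence and uniqueness in one step. The paper instead runs the direct method: a bounded minimising sequence, weak compactness, passage to the limit in the continuous constraint functionals, and weak lower semicontinuity via the bounded operator $T=(-\Delta)^{1/4}$. This is essentially your bulleted alternative. The paper proves uniqueness separately by a first-variation argument. For $w=\Gamma_2-\Gamma_1$ it gets $\langle(-\Delta)^{1/2}\Gamma_1,w\rangle=0$, hence $J(\Gamma_2)=J(\Gamma_1)+J(w)$, which plays the same role as your parallelogram identity.

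Your route is shorter, and it also characterises the minimiser. It is the unique element of $\mathcal A$ that is $\ast$-orthogonal to $\mathcal V$, the common kernel of the two constraint functionals. Since $\langle\Gamma,\varphi\rangle_\ast=\frac{1}{2\pi}\langle(-\Delta)^{1/2}\Gamma,\varphi\rangle$ whenever $\hat\varphi(0)=0$, this orthogonality is precisely the weak Euler--Lagrange relation the paper derives in Section~5. The paper's direct method, by contrast, does not rely on $J$ being a squared Hilbert norm up to a constant, so it would survive non-quadratic modifications of the energy.

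For nonemptiness, your check uses the Lipschitz periodic extension of $x^2$ and a Gram-determinant argument; the paper uses the explicit choice $a+b\cos x$. Both are valid.
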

Here $(-\Delta)^{1/2}$ is the half-Laplacian defined as a pseudo-differential operator.
The minimiser satisfies a weak Euler--Lagrange equation.

\begin{theorem}\label{thm:EL}
Let \(\Gamma_\ast\) be the minimiser in Theorem~\ref{thm:exist-unique}. Then there exist
constants \(\lambda_0,\lambda_2\in\R\) such that
\[
(-\Delta)^{1/2}\Gamma_\ast=\lambda_0+\lambda_2x^2
\qquad\text{in }H^{-1/2}(\T).
\]
\end{theorem}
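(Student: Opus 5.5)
Since $\mathcal A$ is an affine subspace of $H^{1/2}(\T)$ cut out by two continuous linear constraints, we derive the Euler--Lagrange equation by first variations in the direction of the constraint kernel, followed by a finite-dimensional linear algebra argument.

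Define the functionals $\ell_0(\phi)=\int_{-\pi}^{\pi}\phi\,dx$ and $\ell_2(\phi)=\int_{-\pi}^{\pi}x^2\phi\,dx$. Both are continuous on $L^2(\T)\supset H^{1/2}(\T)$, hence elements of $H^{-1/2}(\T)$. They are represented by the functions $1$ and $x^2$, understood as bounded functions on the fundamental domain $(-\pi,\pi]$ and extended periodically. Let $V=\ker\ell_0\cap\ker\ell_2$. For $\phi\in V$ and $t\in\R$ we have $\Gamma_\ast+t\phi\in\mathcal A$. Since $J$ is a symmetric, bounded quadratic form on $H^{1/2}$,
\[
J(\Gamma_\ast+t\phi)=J(\Gamma_\ast)+2t\langle(-\Delta)^{1/2}\Gamma_\ast,\phi\rangle+t^2J(\phi).
\]
Here the duality pairing is well defined because the symbol $|k|$ maps $H^{1/2}$ to $H^{-1/2}$. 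Minimality at $t=0$ gives
\[
\langle(-\Delta)^{1/2}\Gamma_\ast,\phi\rangle=0\qquad\text{for all }\phi\in V.
\]
So $T:=(-\Delta)^{1/2}\Gamma_\ast\in H^{-1/2}$ vanishes on $\ker\ell_0\cap\ker\ell_2$.

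Next we use the standard lemma that a linear functional vanishing on the intersection of the kernels of finitely many linear functionals is a linear combination of them. To obtain the coefficients explicitly, choose $\psi_0,\psi_2\in H^{1/2}$ (for example, smooth functions) such that the matrix $(\ell_i(\psi_j))_{i,j\in\{0,2\}}$ is invertible. This is possible because $1$ and $x^2$ are linearly independent on $(-\pi,\pi)$, so their Gram-type pairings against trigonometric polynomials are not all degenerate. Any $\phi\in H^{1/2}$ can then be written as
\[
\phi=\Bigl(\phi-\sum_j c_j(\phi)\psi_j\Bigr)+\sum_j c_j(\phi)\psi_j,
\]
with the coefficients $c_j$ chosen so that the bracketed term lies in $V$. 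The $c_j$ are linear combinations of $\ell_0(\phi)$ and $\ell_2(\phi)$. Applying $T$ gives $T(\phi)=\lambda_0\ell_0(\phi)+\lambda_2\ell_2(\phi)$ for constants $\lambda_0,\lambda_2$. This is precisely the statement $T=\lambda_0+\lambda_2x^2$ in $H^{-1/2}(\T)$, where the right-hand side denotes the distribution given by integration against the periodised function. Equivalently, one can argue with Hilbert-space orthogonality: take Riesz representers $r_0,r_2\in H^{1/2}$ of $\ell_0,\ell_2$. Then $V=\operatorname{span}\{r_0,r_2\}^{\perp}$, and the representer of $T$ lies in $V^\perp=\operatorname{span}\{r_0,r_2\}$.

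The main subtlety is interpretive rather than technical. Because $x^2$ is not periodic, the right-hand side must be understood as the sawtooth-type periodic extension of $x^2|_{(-\pi,\pi]}$. This extension is in $L^\infty\subset H^{-1/2}$, so it is a legitimate element, and we must make sure the identification of $\ell_2$ with this distribution is stated carefully. One should also check the degenerate case in which $\ell_0$ and $\ell_2$ are linearly dependent on $H^{1/2}$. This does not occur, since $H^{1/2}$ is dense in $L^2$ and $1$, $x^2$ are independent in $L^2(-\pi,\pi)$. Therefore $\lambda_0$ and $\lambda_2$ are uniquely determined.
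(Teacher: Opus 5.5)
Your proof is correct and follows the paper's route: a first variation along $\mathcal V=\ker\ell_0\cap\ker\ell_2$, symmetry of the half-Laplacian to obtain $\langle(-\Delta)^{1/2}\Gamma_\ast,\varphi\rangle=0$ for all $\varphi\in\mathcal V$, and then the finite-codimension annihilator lemma to write $(-\Delta)^{1/2}\Gamma_\ast=\lambda_0\ell_0+\lambda_2\ell_2$. Your explicit decomposition (or the Riesz-representer version) and your density argument for the linear independence of $\ell_0,\ell_2$ just spell out steps that the paper asserts without proof.
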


The next result identifies the regularity available on the torus.

\begin{theorem}\label{thm:regularity}
Let \(\Gamma_\ast\) be the minimiser. Then
\[
\Gamma_\ast\in H^s(\T)\qquad\text{for every }s<\frac52.
\]
In particular,
\[
\Gamma_\ast\in C^{1,\alpha}(\T)
\qquad\text{for every }\alpha<1.
\]
\end{theorem}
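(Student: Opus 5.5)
The plan is to read off the regularity of \(\Gamma_\ast\) from the Euler--Lagrange equation of Theorem~\ref{thm:EL} on the Fourier side. On \(\T\) the function \(x^2\) means the \(2\pi\)-periodic extension of \(x\mapsto x^2\) from \([-\pi,\pi)\). This extension is continuous and piecewise smooth, and its derivative \(2x\) jumps at \(x=\pm\pi\). Its Fourier coefficients are
\[
\widehat{x^2}(0)=\frac{\pi^2}{3},\qquad \widehat{x^2}(k)=\frac{2(-1)^k}{k^2}\quad (k\neq 0),
\]
which can be checked by two integrations by parts. Consequently \(\lambda_0+\lambda_2x^2\in H^{t}(\T)\) exactly for \(t<3/2\), because \(\sum_k |k|^{2t}k^{-4}<\infty\) if and only if \(t<3/2\). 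Constants are smooth, so they cause no difficulty.

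Next, test the identity \((-\Delta)^{1/2}\Gamma_\ast=\lambda_0+\lambda_2x^2\) in \(H^{-1/2}(\T)\) against \(e^{-ikx}\), which lies in \(H^{1/2}\). This gives
\[
|k|\,\widehat{\Gamma_\ast}(k)=\lambda_0\delta_{k0}+\lambda_2\widehat{x^2}(k)\qquad\text{for every }k\in\Z.
\]
At \(k=0\) this forces the compatibility relation \(\lambda_0+\lambda_2\pi^2/3=0\). The zero mode \(\widehat{\Gamma_\ast}(0)=L_0/(2\pi)\) is not determined by the equation, but it is fixed by the lift constraint, and it is harmless for regularity in any case. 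For \(k\neq0\) we get \(\widehat{\Gamma_\ast}(k)=2\lambda_2(-1)^k/|k|^3\). Therefore
\[
\|\Gamma_\ast\|_{H^s}^2\lesssim |\widehat{\Gamma_\ast}(0)|^2+\lambda_2^2\sum_{k\neq0}|k|^{2s-6},
\]
and the sum is finite precisely when \(2s-6<-1\), that is \(s<5/2\). This proves the first claim, and it also shows that the range \(s<5/2\) is sharp whenever \(\lambda_2\neq 0\).

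For the Hölder statement I will use the Sobolev embedding \(H^{s}(\T)\hookrightarrow C^{1,\alpha}(\T)\) for \(s>3/2+\alpha\). A more direct route is available: since \(\sum_k |k|\,|\widehat{\Gamma_\ast}(k)|\lesssim\sum_k k^{-2}<\infty\), the derivative \(\Gamma_\ast'\) has an absolutely convergent Fourier series. For the Hölder seminorm of \(\Gamma_\ast'\), split the series at \(|k|\sim|h|^{-1}\). The low modes are bounded by \(\sum_{|k|\le 1/|h|}|k|^{-2}\,|k||h|\lesssim |h|\log(1/|h|)\), and the tail by \(\sum_{|k|>1/|h|}k^{-2}\lesssim|h|\). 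So \(\Gamma_\ast'\) is even log-Lipschitz, which gives \(C^{1,\alpha}\) for every \(\alpha<1\).

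The only step that needs care is the first one. One has to identify correctly what \(x^2\) means as an element of \(H^{-1/2}(\T)\), namely its periodic extension, and compute its Fourier decay \(k^{-2}\) accurately. This decay is what limits the regularity to \(s<5/2\). One also has to justify testing the weak equation against individual exponentials, which is legitimate since each \(e^{ikx}\) belongs to \(H^{1/2}(\T)\).
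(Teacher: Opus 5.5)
Your proposal is correct and follows the paper's proof essentially step for step. Like the paper, you take Fourier coefficients in the Euler--Lagrange equation, use $\widehat{x^2}(k)=2(-1)^k/k^2$ to get $|\widehat{\Gamma_\ast}(k)|\lesssim|k|^{-3}$, sum to obtain $H^s$ for $s<5/2$, and invoke $H^s(\T)\hookrightarrow C^{1,\alpha}(\T)$ for $s>3/2+\alpha$. Your additions are correct refinements rather than a different route: the compatibility condition $\lambda_0+\lambda_2\pi^2/3=0$ forced at $k=0$ (which the paper leaves implicit when it writes $\hat F(0)$), the sharpness of $s<5/2$ when $\lambda_2\neq0$, and the direct log-Lipschitz bound for $\Gamma_\ast'$.
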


Finally, after returning to the interval \((-1,1)\), we recover the explicit form of the
solution.

\begin{theorem}\label{thm:explicit}
Let \(\Gamma_\ast\) be the unique minimiser, rescaled to \((-1,1)\). Then there exist constants
\(c_0,a,b\in\R\) such that
\[
\Gamma_\ast(x)=c_0+(a+bx^2)\sqrt{1-x^2},
\qquad x\in(-1,1).
\]
If, in addition, one imposes the physical tip condition \(\Gamma_\ast(\pm1)=0\), then
\[
\Gamma_\ast(x)=(a+bx^2)\sqrt{1-x^2}.
\]
\end{theorem}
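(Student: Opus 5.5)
The plan is to pass from the weak Euler--Lagrange equation of Theorem~\ref{thm:EL} to the classical finite-interval lifting-line equation, and then invert it with Tricomi's formula. By the regularity of Theorem~\ref{thm:regularity}, $\Gamma_\ast\in C^{1,\alpha}$, so $\Gamma_\ast'$ is Hölder continuous. The operator $(-\Delta)^{1/2}$ equals $H\partial_x$, where $H$ is the periodic Hilbert transform with kernel $\frac{1}{2\pi}\cot\frac{x-y}{2}$. Hence the Euler--Lagrange identity holds pointwise as
\[
\frac{1}{2\pi}\pv\!\int_{-\pi}^{\pi}\Gamma_\ast'(y)\cot\frac{x-y}{2}\,dy=\lambda_0+\lambda_2x^2 .
\]
I would write $\cot\frac{t}{2}=\frac{2}{t}+r(t)$ with $r$ smooth near $0$. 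After rescaling $x\mapsto \pi x$ to the interval $(-1,1)$ (the rescaling announced before the statement), this becomes an airfoil-type equation
\[
\frac{1}{\pi}\pv\!\int_{-1}^{1}\frac{\gamma(y)}{x-y}\,dy=f(x),
\]
with $\gamma=\Gamma_\ast'$ after rescaling. Here $f$ is a quadratic polynomial plus a contribution from the smooth remainder.

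\textbf{Step 1: reduce to a polynomial right-hand side.} I intend to follow the paper's identification of the drag functional on the torus with Prandtl's functional on the interval. Under that identification the Euler--Lagrange equation is exactly the statement that the downwash
\[
w(x)=\frac{1}{4\pi}\pv\!\int_{-1}^1\frac{\Gamma'(y)}{x-y}\,dy
\]
equals $\lambda_0+\lambda_2x^2$ (suitably rescaled), with no remainder. I expect this to be the main obstacle. One must justify that, after rescaling, the operator acting on the minimiser coincides on $(-1,1)$ with the finite Hilbert transform with kernel $1/(x-y)$. Otherwise the remainder $r$ spoils the polynomial structure. I would try to handle this by taking the interval formulation of $J$ as primary and repeating the Lagrange multiplier argument of Theorem~\ref{thm:EL} directly for the bilinear form $\frac{1}{4\pi}\pv\iint\frac{\Gamma(x)\Psi'(y)}{x-y}$. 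This yields the interval equation with polynomial right-hand side $P(x)=\lambda_0+\lambda_2x^2$.

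\textbf{Step 2: Tricomi inversion.} The finite Hilbert transform on $(-1,1)$ has a one-dimensional kernel, spanned by $(1-x^2)^{-1/2}$. Tricomi's formula gives
\[
\gamma(x)=-\frac{1}{\pi\sqrt{1-x^2}}\pv\!\int_{-1}^1\frac{\sqrt{1-y^2}\,P(y)}{x-y}\,dy+\frac{C}{\sqrt{1-x^2}} .
\]
For polynomial $P$ the integral is explicit, using $\frac1\pi\pv\int_{-1}^1\frac{\sqrt{1-y^2}\,y^n}{x-y}dy$, which equals $x^{n+1}$ plus lower-order terms, e.g.\ $x$ for $n=0$ and $x^3-\tfrac{x}{2}$ for $n=2$. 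This gives
\[
\gamma(x)=\frac{q(x)+C}{\sqrt{1-x^2}}
\]
with $q$ an odd cubic polynomial.

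\textbf{Step 3: integrate and identify constants.} Since $\Gamma_\ast$ is even (the constraints and the functional are invariant under $x\mapsto-x$, so uniqueness from Theorem~\ref{thm:exist-unique} forces evenness), $\gamma$ is odd and therefore $C=0$. Every odd cubic $q$ can be written in the form $q(x)=-\frac{d}{dx}$ applied to a suitable profile, via the identity
\[
\frac{d}{dx}\Bigl[(a+bx^2)\sqrt{1-x^2}\Bigr]=\frac{(2b-a)x-3bx^3}{\sqrt{1-x^2}} .
\]
Matching the coefficients of $x$ and $x^3$ determines $a$ and $b$ linearly from $\lambda_0,\lambda_2$. Integrating then gives
\[
\Gamma_\ast(x)=c_0+(a+bx^2)\sqrt{1-x^2}.
\]
Finally, the tip condition $\Gamma_\ast(\pm1)=0$, which makes sense because $\Gamma_\ast$ is continuous by Theorem~\ref{thm:regularity}, evaluates to $c_0=0$.
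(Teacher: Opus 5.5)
\textbf{The gap is Step~1, and it cannot be closed.} The remainder $r$ you isolated is really there.

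Your workaround is to take the interval functional as primary and redo the Lagrange multiplier argument for $\frac1{4\pi}\pv\iint\frac{\Gamma(x)\Psi'(y)}{x-y}$. That yields the Euler--Lagrange equation of a \emph{different} minimisation problem. Nothing identifies its minimiser with the rescaled $\Gamma_\ast$ of Theorem~\ref{thm:exist-unique}, which minimises $\langle(-\Delta)^{1/2}\Gamma,\Gamma\rangle$ over $\mathcal A\subset H^{1/2}(\T)$. The interval problem is also well posed only once the tip condition is built into the admissible class. Without it the form is indefinite: for instance $J_{\mathrm{cl}}(1+x^2)=-1/\pi$ with the normalisation of \eqref{eq:J-classical}.

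Following the paper's identification does not rescue the step either. The paper crosses it in Propositions~\ref{prop:J-classical-to-spectral}, \ref{prop:equiv-C1a} and~\ref{prop:EL-classical-final} by asserting that $\pv(1/t)$ on $(-\pi,\pi)$ has multiplier $-i\pi\sign(k)$, hence $B=\pi(-\Delta)^{1/2}$. But $-i\pi\sign(k)$ is the multiplier of $\tfrac12\cot\tfrac t2$. For $1/t$ one has $\pv\int_{-\pi}^{\pi}\frac{e^{-ikt}}{t}\,dt=-2i\int_0^{\pi k}\frac{\sin s}{s}\,ds$ for $k\ge1$. Concretely, for $\Gamma=\cos$, $(B\Gamma)(0)=\int_{-\pi}^{\pi}\frac{\sin y}{y}\,dy\approx3.70$, while $\pi(-\Delta)^{1/2}\Gamma(0)=\pi$. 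So that identification silently discards exactly your $r$.

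In fact the conclusion fails for the torus minimiser, so no argument can supply Step~1.
By Theorem~\ref{thm:regularity}, $\Gamma_\ast\in H^2(\T)$. After rescaling, $\Gamma_\ast'$ is bounded on $(-1,1)$ and $\Gamma_\ast''\in L^2(-1,1)$.
By your own identity, the derivative of $c_0+(a+bx^2)\sqrt{1-x^2}$ is $\frac{(2b-a)x-3bx^3}{\sqrt{1-x^2}}$. Its numerator equals $\mp(a+b)$ at $x=\pm1$, so boundedness forces $b=-a$.
Then $a(1-x^2)^{3/2}$ has second derivative $\sim 3a/\sqrt{2(1-|x|)}$ near $\pm1$, which is not in $L^2$ unless $a=0$.
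Hence the formula can hold only if $\Gamma_\ast$ is constant, which happens only when $M_0=\pi^2L_0/3$.

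Explicitly, pairing Theorem~\ref{thm:EL} with the constant $1$ gives $\lambda_0=-\pi^2\lambda_2/3$. Then $\widehat{\Gamma_\ast}(k)=2\lambda_2(-1)^k/|k|^3$ for $k\ne0$, with $\lambda_2=(M_0-\pi^2L_0/3)/(16\pi\zeta(5))$. This gives $\Gamma_\ast''(x)=4\lambda_2\log\bigl(2\cos\tfrac x2\bigr)$ on $(-\pi,\pi)$: zero slope and only a logarithmic curvature singularity at the tips.

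Your Steps~2--3 are sound. Your evenness argument for $C=0$ is a valid alternative to the paper's periodicity argument against the $\arcsin$ term. But combined with your version of Step~1, they prove Prandtl's classical result for the interval problem posed on circulations vanishing at the tips (e.g.\ $H^{1/2}(\R)$ functions supported in $[-1,1]$). They do not prove a statement about the minimiser of Theorem~\ref{thm:exist-unique}.
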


The paper is organised as follows. In Section~2 we recall the classical lifting-line
formulation. In Section~3 we introduce the periodic Sobolev framework and identify the drag
with the quadratic form of the half-Laplacian. Section~4 contains the proof of existence and
uniqueness. In Section~5 we derive the Euler--Lagrange equation and obtain regularity of the
minimiser. Finally, in Section~6 we return to the interval \((-1,1)\), derive the classical
singular integral equation, and solve it explicitly.

\section{Classical formulation}

In Prandtl's lifting-line theory, a finite wing is modelled by a spanwise circulation
distribution \(\Gamma:(-1,1)\to\R\). The induced downwash is given formally by
\begin{equation}\label{eq:BS-classical}
w(x)=\frac{1}{4\pi}\pv\!\int_{-1}^1\frac{\Gamma'(y)}{x-y}\,dy,
\qquad x\in(-1,1),
\end{equation}
where the principal value is needed because of the singularity at \(y=x\).

\begin{figure}[h]
\centering
\begin{tikzpicture}[scale=1.15,>=Latex]

\foreach \y in {-1.4,-0.5,0.4}
{
    \draw[->] (-3.3,\y) -- (-2.1,\y);
}
\node at (-2.7,1.0) {$V_\infty$};

\draw[thick] (-1.2,-2.4) -- (0.9,2.4);
\node[rotate=65] at (-0.6,0.15) {lifting line};

\node[left] at (-1.2,-2.4) {$-1$};
\node[above] at (0.60,2.30) {$1$};

\node at (1.35,2.75) {$\Gamma(x)$};

\foreach \y in {-2.1,-1.7,-1.3,-0.9,-0.5,-0.1,0.3,0.7,1.1,1.5,1.9,2.3}
{
    \pgfmathsetmacro{\xstart}{-1.2 + ((\y+2.4)/4.8)*2.1}
    \draw[thin] (\xstart,\y) -- (5.2,\y);
}

\foreach \y in {-1.7,-0.9,-0.1,0.7,1.5}
{
    \draw[->] (4.1,\y) -- (4.9,\y);
}

\foreach \t in {0.15,0.35,0.55,0.75,0.9}
{
    \draw[->] ({-1.2 + \t*2.1},{-2.4 + \t*4.8}) -- ++(0,-0.55);
}

\node at (0.8,0.5) {$w(x)$};

\end{tikzpicture}
\caption{Schematic of Prandtl's lifting-line model. The circulation \(\Gamma(x)\)
is supported on the lifting line, while the trailing vortex sheet is convected
downstream by the freestream \(V_\infty\) and induces a downwash \(w(x)\).}
\label{fig:liftingline}
\end{figure}

By the Kutta--Joukowski law, the induced drag is obtained by integrating the product of the
circulation and the downwash along the span. Up to a constant factor, this leads to the
quadratic functional
\begin{equation}\label{eq:J-classical}
J_{\mathrm{cl}}(\Gamma)
:=
\int_{-1}^1 \Gamma(x)\,w(x)\,dx
=
\frac{1}{4\pi}\pv\!\int_{-1}^1\!\int_{-1}^1
\frac{\Gamma(x)\Gamma'(y)}{x-y}\,dx\,dy,
\end{equation}
which is well defined for smooth compactly supported circulation laws, see  \cite{Prandtl1933} and \cite{prandtl-original}.

Prandtl's problem is to minimise \(J_{\mathrm{cl}}\) subject to linear constraints. In this paper
we prescribe the total lift
\[
\int_{-1}^1\Gamma(x)\,dx=L_0
\]
and the second moment
\[
\int_{-1}^1x^2\Gamma(x)\,dx=M_0.
\]
These constraints correspond to fixing the total loading and an effective spanwise moment of the
circulation distribution.

The classical formulation is natural from the physical point of view, but for a rigorous
variational treatment it is preferable to work in a weaker functional setting.

\section{Periodic Sobolev formulation}

We reformulate the problem on the torus
\[
\T=\R/2\pi\Z,
\]
identified with \((-\pi,\pi)\) with periodic boundary conditions. We use the Sobolev spaces
\(H^s(\T)\), their duals \(H^{-s}(\T)\) \cite{hebey}, \cite{brezis},  and the fractional Laplacian defined through Fourier
series \cite{katznelson}.

\begin{definition}
For $s\in\mathbb R$, the periodic Sobolev space $H^s(\mathbb T)$ is defined by
\[
H^s(\mathbb T)
:=
\Bigl\{
f\in\mathcal D'(\mathbb T):
\sum_{k\in\mathbb Z}(1+|k|^2)^s |\hat f(k)|^2<\infty
\Bigr\}.
\]
It is equipped with the norm
\[
\|f\|_{H^s(\mathbb T)}^2
:=
\sum_{k\in\mathbb Z}(1+|k|^2)^s |\hat f(k)|^2.
\]
\end{definition}
For $f\in L^2(\mathbb T)$, its Fourier coefficients are defined by
\[
\hat f(k):=\frac{1}{2\pi}\int_{-\pi}^{\pi}f(x)e^{-ikx}\,dx,
\qquad k\in\mathbb Z.
\]
The complex exponentials $e^{ikx}$, $k\in\mathbb Z$, form the natural basis
for Fourier analysis on the torus.

\begin{definition}
For $\alpha\ge0$, the fractional Laplacian on $\mathbb T$ is defined by
\[
\widehat{(-\Delta)^\alpha f}(k)
:=
|k|^{2\alpha}\hat f(k),
\qquad k\in\mathbb Z.
\]
\end{definition}

For \(u\in H^{1/2}(\T)\) one has
\begin{equation}\label{eq:duality-norm-fourier}
\langle (-\Delta)^{1/2}u,u\rangle
=
\|(-\Delta)^{1/4}u\|_{L^2(\T)}^2
=
2\pi\sum_{k\in\Z}|k|\,|\hat u(k)|^2.
\end{equation}
This identity motivates the following definition.

\begin{definition}\label{def:periodic-problem}
Let \(L_0,M_0\in\R\). Define
\[
\ell_0(\Gamma):=\int_{-\pi}^{\pi}\Gamma(x)\,dx,
\qquad
\ell_2(\Gamma):=\int_{-\pi}^{\pi}x^2\Gamma(x)\,dx,
\]
and the admissible set
\[
\mathcal A
:=
\Bigl\{
\Gamma\in H^{1/2}(\T):
\ell_0(\Gamma)=L_0,\ 
\ell_2(\Gamma)=M_0
\Bigr\}.
\]
The periodic Prandtl problem consists in minimising
\begin{equation}\label{eq:J-periodic}
J(\Gamma):=\langle (-\Delta)^{1/2}\Gamma,\Gamma\rangle
\end{equation}
over \(\mathcal A\).
\end{definition}

\begin{remark}
By \eqref{eq:duality-norm-fourier},
\[
J(\Gamma)=\|(-\Delta)^{1/4}\Gamma\|_{L^2(\T)}^2
=
2\pi\sum_{k\in\Z}|k|\,|\hat\Gamma(k)|^2.
\]
In particular, \(J\) is nonnegative.
\end{remark}

\begin{remark}
Since \(H^{1/2}(\T)\hookrightarrow L^2(\T)\) and \(1,x^2\in L^2(-\pi,\pi)\), the functionals
\[
\ell_0(\Gamma)=\int_{-\pi}^{\pi}\Gamma(x)\,dx,
\qquad
\ell_2(\Gamma)=\int_{-\pi}^{\pi}x^2\Gamma(x)\,dx
\]
are bounded on \(L^2(-\pi,\pi)\) by the Cauchy--Schwarz inequality. Hence they define continuous
linear functionals on \(H^{1/2}(\T)\).
\end{remark}

We next record the symmetry of the half-Laplacian with respect to the
\(H^{-1/2}\)--\(H^{1/2}\) pairing.

\begin{lemma}[Symmetry]\label{lem:half-lap-symmetric}
For all \(f,g\in H^{1/2}(\T)\),
\[
\langle (-\Delta)^{1/2}f,g\rangle_{H^{-1/2},H^{1/2}}
=
\langle (-\Delta)^{1/2}g,f\rangle_{H^{-1/2},H^{1/2}}.
\]
\end{lemma}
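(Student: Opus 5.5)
The plan is to express both sides of the identity in Fourier variables and compare them term by term. First we fix the pairing. For \(F\in H^{-1/2}(\T)\) and \(g\in H^{1/2}(\T)\) we take
\[
\langle F,g\rangle_{H^{-1/2},H^{1/2}}=2\pi\sum_{k\in\Z}\hat F(k)\,\overline{\hat g(k)},
\]
which reduces to \(\int_{-\pi}^{\pi}F\bar g\,dx\) when \(F,g\in L^2\), by Parseval. The series converges absolutely: write
\[
|\hat F(k)\hat g(k)|=(1+|k|^2)^{-1/4}|\hat F(k)|\cdot(1+|k|^2)^{1/4}|\hat g(k)|
\]
and apply Cauchy--Schwarz, which gives \(|\langle F,g\rangle|\le 2\pi\|F\|_{H^{-1/2}}\|g\|_{H^{1/2}}\). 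This is also the convention implicit in \eqref{eq:duality-norm-fourier}. Because \(\Gamma\) and the constraints are real, we work with real-valued \(f,g\), so that \(\hat f(-k)=\overline{\hat f(k)}\).

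Next we check that \((-\Delta)^{1/2}\) maps \(H^{1/2}(\T)\) boundedly into \(H^{-1/2}(\T)\). For \(f\in H^{1/2}(\T)\),
\[
\sum_k(1+|k|^2)^{-1/2}|k|^2|\hat f(k)|^2\le\sum_k(1+|k|^2)^{1/2}|\hat f(k)|^2<\infty,
\]
so \((-\Delta)^{1/2}f\in H^{-1/2}(\T)\) and both sides of the identity are well defined. Inserting the symbol \(|k|\) gives
\[
\langle(-\Delta)^{1/2}f,g\rangle=2\pi\sum_{k\in\Z}|k|\,\hat f(k)\,\overline{\hat g(k)},\qquad
\langle(-\Delta)^{1/2}g,f\rangle=2\pi\sum_{k\in\Z}|k|\,\hat g(k)\,\overline{\hat f(k)}.
\]
Each series converges absolutely, since \(|k|\le(1+|k|^2)^{1/2}\) and Cauchy--Schwarz apply in \(H^{1/2}\).

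The second expression is the complex conjugate of the first. To show the first is real, substitute \(k\mapsto -k\), use \(|-k|=|k|\) and the reality relations \(\hat f(-k)=\overline{\hat f(k)}\), \(\hat g(-k)=\overline{\hat g(k)}\). This turns the sum into its own conjugate, so it is real and the two pairings coincide.

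The main obstacle is the convention for the pairing. If the paper intends the bilinear pairing \(\sum_k \hat F(k)\hat g(-k)\) rather than the sesquilinear one, the same computation still works: the symmetry then follows directly from the evenness of \(|k|\) under reindexing \(k\mapsto-k\), without needing reality. In either convention, the only analytic input is the absolute convergence established above, which justifies the termwise manipulations.
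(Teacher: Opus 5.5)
Your proposal is correct and follows the same Fourier-side computation as the paper, which writes both pairings as $2\pi\sum_{k}|k|\,\hat f(k)\overline{\hat g(k)}$ and exchanges the roles of $f$ and $g$. The paper's middle equality is true only because the sum is real, and your reindexing $k\mapsto -k$ with $\hat f(-k)=\overline{\hat f(k)}$ (i.e.\ using that $f,g$ are real-valued) supplies the justification the paper leaves unstated; your absolute-convergence check also makes the termwise manipulations rigorous.
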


\begin{proof}
By the Fourier definitions of the pairing and of \((-\Delta)^{1/2}\),
\[
\langle (-\Delta)^{1/2}f,g\rangle
=
2\pi\sum_{k\in\Z}|k|\,\hat f(k)\,\overline{\hat g(k)}
=
2\pi\sum_{k\in\Z}|k|\,\hat g(k)\,\overline{\hat f(k)}
=
\langle (-\Delta)^{1/2}g,f\rangle.
\]
\end{proof}

We now identify the functional \(J\) with the classical drag.

\begin{proposition}\label{prop:J-classical-to-spectral}
For \(\Gamma\in C^\infty(\T)\), define
\[
(B\Gamma)(x):=\pv\!\int_{-\pi}^{\pi}\frac{\Gamma'(y)}{x-y}\,dy,
\qquad x\in(-\pi,\pi),
\]
and
\[
J_{\mathrm{cl}}(\Gamma):=\int_{-\pi}^{\pi}\Gamma(x)(B\Gamma)(x)\,dx.
\]
Then
\begin{equation}\label{eq:J-Fourier}
J_{\mathrm{cl}}(\Gamma)=\pi J(\Gamma).
\end{equation}
In particular, \eqref{eq:J-periodic} provides a canonical extension of Prandtl's induced drag
functional to \(H^{1/2}(\T)\).
\end{proposition}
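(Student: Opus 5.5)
The plan is to diagonalise $B$ in the Fourier basis $e^{ikx}$ and then use Parseval. The one point needing care is the kernel. On $\T$, the principal value against $1/(x-y)$ has to be read periodically, i.e.\ through the symmetrically summed periodisation
\[
\sum_{n\in\Z}\frac{1}{x-y+2\pi n}=\frac12\cot\Bigl(\frac{x-y}{2}\Bigr).
\]
Only with this reading can $B$ commute with translations on $\T$ and act as a Fourier multiplier. The literal difference $\frac{1}{z}-\frac12\cot\frac z2$ is smooth on $(-2\pi,2\pi)$ but not zero. So I would state this convention explicitly at the start: it is the one under which \eqref{eq:J-Fourier} holds, and the one under which $J$ can be called a canonical extension.

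\textbf{Step 1: the multiplier of $B$.} Write $B\Gamma = \pi\, H(\Gamma')$, where
\[
Hf(x)=\frac{1}{2\pi}\pv\!\int_{-\pi}^{\pi}\cot\Bigl(\frac{x-y}{2}\Bigr)f(y)\,dy
\]
is the periodic Hilbert transform. The standard fact needed is $\widehat{Hf}(k)=-i\,\sign(k)\hat f(k)$. To get it, compute $\frac{1}{2\pi}\pv\int_{-\pi}^{\pi}\cot(t/2)e^{-ikt}\,dt=-i\,\sign k$; this follows, for instance, from the identity $\cot(t/2)=2\sum_{j\ge1}\sin(jt)$ in the distributional sense, or by a residue computation with $z=e^{it}$. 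Then apply the convolution theorem on $\T$. For smooth $\Gamma$ we have $\widehat{\Gamma'}(k)=ik\hat\Gamma(k)$, and therefore
\[
\widehat{B\Gamma}(k)=\pi(-i\,\sign k)(ik)\hat\Gamma(k)=\pi|k|\hat\Gamma(k),
\]
that is, $B=\pi(-\Delta)^{1/2}$ on $C^\infty(\T)$.

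\textbf{Step 2: Parseval.} Since $\Gamma$ is smooth, $B\Gamma$ is smooth: it equals $\pi(-\Delta)^{1/2}\Gamma$, whose coefficients decay rapidly. Hence all series converge absolutely, and Parseval, $\int_{-\pi}^{\pi} f\bar g\,dx=2\pi\sum_k \hat f(k)\overline{\hat g(k)}$, gives
\[
J_{\mathrm{cl}}(\Gamma)=2\pi\sum_{k}\pi|k|\,|\hat\Gamma(k)|^2=\pi\cdot 2\pi\sum_k|k|\,|\hat\Gamma(k)|^2=\pi J(\Gamma),
\]
using \eqref{eq:duality-norm-fourier}. Because $\Gamma$ is real-valued, $\hat\Gamma(-k)=\overline{\hat\Gamma(k)}$, so the conjugation in Parseval causes no trouble.

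\textbf{Step 3: extension to $H^{1/2}$.} By \eqref{eq:duality-norm-fourier}, $J$ is a continuous quadratic form on $H^{1/2}(\T)$. Since $C^\infty(\T)$ is dense in $H^{1/2}(\T)$, truncate the Fourier series to approximate any element. The continuous extension is therefore unique, and it equals $\pi^{-1}J_{\mathrm{cl}}$ on smooth functions, which justifies the word ``canonical''.

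\textbf{Main obstacle.} The delicate step is Step 1, in two respects. First, one must justify exchanging the principal value with the Fourier expansion. I would first subtract $\Gamma'(x)$ inside the integral: $\cot$ is odd, so the subtracted term integrates to zero in the principal-value sense, and the remainder $(\Gamma'(y)-\Gamma'(x))\cot\frac{x-y}{2}$ is bounded and integrable. Dominated convergence then applies termwise to the Fourier series of $\Gamma'$. Second, one must settle the kernel convention described above.
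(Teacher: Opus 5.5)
Your route is the same as the paper's: realise $B$ as a convolution on $\T$ with multiplier $\pi|k|$, then apply Parseval. The one real difference is the kernel. There your version is correct and the paper's is not.

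The paper periodises $K=\pv(1/t)|_{(-\pi,\pi)}$ and asserts both $B\Gamma=\Gamma'*K$ and $\widehat K(k)=-i\pi\sign(k)$. Both assertions fail:
\begin{itemize}
\item Since $x-y$ ranges over $(-2\pi,2\pi)$, the literal kernel $1/(x-y)$ agrees with this periodisation only when $|x-y|<\pi$.
\item The coefficients of the periodised kernel are
\[
\pv\!\int_{-\pi}^{\pi}\frac{e^{-ikt}}{t}\,dt=-2i\,\sign(k)\int_0^{\pi|k|}\frac{\sin u}{u}\,du\neq -i\pi\sign(k).
\]
\end{itemize}
The value $-i\pi\sign(k)$ (unnormalised) is the coefficient of $\tfrac12\cot\tfrac t2$, which is exactly the kernel you use.

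Your claim that $\frac1z-\frac12\cot\frac z2\neq 0$ actually matters can be made concrete, and you should include this. Take $\Gamma(x)=\cos x$ and write $\cos x\sin y=\frac12[\sin(x+y)-\sin(x-y)]$. The $\sin(x+y)$ part integrates to zero by antisymmetry under symmetric truncation, so the literal definition gives
\[
J_{\mathrm{cl}}(\Gamma)=\int_0^{2\pi}\frac{\sin z}{z}\,(2\pi-z)\,dz=2\pi\int_0^{2\pi}\frac{\sin z}{z}\,dz\approx 8.91,
\]
whereas $\pi J(\Gamma)=\pi^2\approx 9.87$. Using the paper's periodised kernel instead gives a third value, $2\pi\int_0^{\pi}\frac{\sin u}{u}\,du\approx 11.64$.

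So the proposition is true only under the periodic reading of the principal value that you adopt. With that reading, your Steps 1--3 form a complete proof, including the justification of exchanging the principal value with the Fourier series.

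One small correction to your motivation: the periodisation of $\pv(1/t)|_{(-\pi,\pi)}$ also commutes with translations. So ``only with this reading can $B$ commute with translations'' is too strong. What singles out $\frac12\cot\frac t2$ is that its multiplier is exactly $-i\pi\sign(k)$. This gives $\pi|k|$ for $B$, rather than $2|k|\int_0^{\pi|k|}\frac{\sin u}{u}\,du$.
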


\begin{proof}
Write
\[
\Gamma(x)=\sum_{k\in\Z}\hat\Gamma(k)e^{ikx},
\qquad
\hat\Gamma(k)=\frac1{2\pi}\int_{-\pi}^{\pi}\Gamma(x)e^{-ikx}\,dx.
\]
Then
\[
\Gamma'(x)=\sum_{k\in\Z}(ik)\hat\Gamma(k)e^{ikx}.
\]

Let
\[
K(t):=\pv\!\Bigl(\frac1t\Bigr),
\qquad t\in(-\pi,\pi),
\]
viewed as a periodic distribution on \(\T\). Then
\[
B\Gamma=\Gamma'*K
\]
in the distributional sense, and therefore
\[
\widehat{B\Gamma}(k)=\widehat{\Gamma'}(k)\,\widehat K(k).
\]
Since
\[
\widehat{\Gamma'}(k)=ik\,\hat\Gamma(k)
\qquad\text{and}\qquad
\widehat K(k)=-i\pi\,\sign(k),
\]
we obtain
\[
\widehat{B\Gamma}(k)=\pi|k|\,\hat\Gamma(k).
\]

By Parseval's identity,
\[
J_{\mathrm{cl}}(\Gamma)
=
\int_{-\pi}^{\pi}\Gamma(x)(B\Gamma)(x)\,dx
=
2\pi\sum_{k\in\Z}\hat\Gamma(k)\,\overline{\widehat{B\Gamma}(k)}
=
2\pi^2\sum_{k\in\Z}|k|\,|\hat\Gamma(k)|^2.
\]
Using \eqref{eq:duality-norm-fourier}, this becomes
\[
J_{\mathrm{cl}}(\Gamma)=\pi J(\Gamma),
\]
as claimed.
\end{proof}

\begin{remark}
The passage from \((-1,1)\) to \((-\pi,\pi)\) is purely a normalization. A linear change of
variables rescales the interval and introduces only a constant factor in the kernel and in the
energy. Such factors do not affect the structure of the variational problem under fixed linear
constraints, nor the form of the minimiser.
\end{remark}

\begin{proposition}\label{prop:J-finite}
The functional \(J\) is finite on \(H^{1/2}(\T)\).
\end{proposition}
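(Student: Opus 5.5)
The plan is to work entirely on the Fourier side, using the representation \eqref{eq:duality-norm-fourier} from the Remark following Definition~\ref{def:periodic-problem}:
\[
J(\Gamma)=2\pi\sum_{k\in\Z}|k|\,|\hat\Gamma(k)|^2 .
\]
Finiteness of $J$ then reduces to bounding this series by the $H^{1/2}$ norm.

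First I will check that the pairing in \eqref{eq:J-periodic} makes sense for $\Gamma\in H^{1/2}(\T)$. By the Fourier definition of the fractional Laplacian, $(-\Delta)^{1/2}\Gamma$ has coefficients $|k|\hat\Gamma(k)$. Since $|k|^2\le 1+|k|^2$, we have
\[
\sum_k (1+|k|^2)^{-1/2}|k|^2|\hat\Gamma(k)|^2\le \sum_k (1+|k|^2)^{1/2}|\hat\Gamma(k)|^2=\|\Gamma\|_{H^{1/2}}^2 .
\]
Hence $(-\Delta)^{1/2}\Gamma\in H^{-1/2}(\T)$, and the $H^{-1/2}$--$H^{1/2}$ pairing is well defined. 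The pairing is given by the absolutely convergent series $2\pi\sum_k |k|\hat\Gamma(k)\overline{\hat\Gamma(k)}$.

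Next, the elementary inequality $|k|\le (1+|k|^2)^{1/2}$ gives termwise domination. This yields
\[
0\le J(\Gamma)=2\pi\sum_{k}|k|\,|\hat\Gamma(k)|^2\le 2\pi\sum_k(1+|k|^2)^{1/2}|\hat\Gamma(k)|^2=2\pi\|\Gamma\|_{H^{1/2}(\T)}^2<\infty .
\]
This proves finiteness. It also shows that $J$ is a bounded (hence continuous) quadratic form on $H^{1/2}(\T)$, which will be useful later for the existence argument.

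The only point needing care is the identification of the duality pairing with the Fourier series, i.e.\ \eqref{eq:duality-norm-fourier}, for nonsmooth $\Gamma$. This identification was taken as a definition and formula earlier, so I will simply invoke it. If more rigour is wanted, one can verify the identity for trigonometric polynomials and pass to the limit by density in $H^{1/2}$, using the bound above for continuity. Absolute convergence of the series removes any issue of ordering the summation.
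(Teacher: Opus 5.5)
Your proposal is correct and follows essentially the same route as the paper: both use the Fourier representation \eqref{eq:duality-norm-fourier} and the termwise bound $|k|\le(1+|k|^2)^{1/2}$ to obtain $J(\Gamma)\le 2\pi\|\Gamma\|_{H^{1/2}(\T)}^2<\infty$. Your preliminary check that $(-\Delta)^{1/2}\Gamma\in H^{-1/2}(\T)$, so that the pairing is well defined, is a sound addition that the paper leaves implicit.
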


\begin{proof}
Let \(\Gamma\in H^{1/2}(\T)\). By the Fourier multiplier definition of \((-\Delta)^{1/4}\) and
Parseval's identity,
\[
J(\Gamma)=\|(-\Delta)^{1/4}\Gamma\|_{L^2(\T)}^2
=
2\pi\sum_{k\in\Z}|k|\,|\hat\Gamma(k)|^2.
\]
Since
\[
|k|\le (1+|k|^2)^{1/2},
\]
we obtain
\[
J(\Gamma)
\le
2\pi\sum_{k\in\Z}(1+|k|^2)^{1/2}|\hat\Gamma(k)|^2
=
2\pi\|\Gamma\|_{H^{1/2}(\T)}^2<\infty.
\]
\end{proof}

\section{Existence and uniqueness of minimisers}

We begin with the nonemptiness of the admissible set.

\begin{lemma}\label{lem:A-nonempty}
For every \(L_0,M_0\in\R\), the set \(\mathcal A\) is nonempty.
\end{lemma}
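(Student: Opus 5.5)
The plan is to look for an element of \(\mathcal A\) inside the two-dimensional space spanned by two explicit smooth functions. The natural choice is \(\Gamma=\alpha+\beta x^2\), but \(x^2\) restricted to \((-\pi,\pi)\) is continuous and periodic (it takes the value \(\pi^2\) at both endpoints), so it belongs to \(H^{1/2}(\T)\). Indeed, its periodic extension is Lipschitz, hence lies in \(H^1(\T)\subset H^{1/2}(\T)\). Alternatively, to avoid any discussion of the kink, we can take \(\varphi_0=1\) and \(\varphi_2(x)=\cos x\), both in \(C^\infty(\T)\). Either way the candidate \(\alpha\varphi_0+\beta\varphi_2\) lies in \(H^{1/2}(\T)\), since its Fourier coefficients are finitely supported or decay like \(k^{-2}\).

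The constraints become the linear system
\[
\alpha\,\ell_0(\varphi_0)+\beta\,\ell_0(\varphi_2)=L_0,\qquad \alpha\,\ell_2(\varphi_0)+\beta\,\ell_2(\varphi_2)=M_0 .
\]
It suffices to check that the \(2\times2\) determinant is nonzero. With \(\varphi_2=\cos x\) we have \(\ell_0(1)=2\pi\), \(\ell_0(\cos)=0\), \(\ell_2(1)=2\pi^3/3\) and \(\ell_2(\cos)=\int_{-\pi}^{\pi}x^2\cos x\,dx=-4\pi\), the last by two integrations by parts. The determinant is therefore \(2\pi\cdot(-4\pi)\neq0\), and the system has a unique solution \((\alpha,\beta)\) for every \((L_0,M_0)\).

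More conceptually, the determinant is nonzero exactly when \(\ell_0,\ell_2\) are linearly independent on \(\mathrm{span}\{\varphi_0,\varphi_2\}\). This reflects the fact that \(1\) and \(x^2\) are linearly independent in \(L^2(-\pi,\pi)\), so the map \((\ell_0,\ell_2)\colon H^{1/2}(\T)\to\R^2\) is surjective. There is no real obstacle here; the only point requiring care is the computation of \(\ell_2(\varphi_2)\), equivalently the verification that the determinant is nonzero. The argument then concludes that \(\Gamma=\alpha+\beta\cos x\in\mathcal A\).
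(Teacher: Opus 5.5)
Your proposal is correct and is essentially the paper's proof: the paper also takes \(\Gamma=a+b\cos x\), uses \(\ell_0(1)=2\pi\), \(\ell_0(\cos)=0\), \(\ell_2(1)=2\pi^3/3\) and \(\ell_2(\cos)=-4\pi\), and solves the resulting triangular system explicitly for \(a\) and \(b\). Your side remark on \(\alpha+\beta x^2\) is also valid, since the periodic extension of \(x^2\) is Lipschitz and the Gram determinant of \(1,x^2\) is nonzero, but it is not needed.
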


\begin{proof}
Consider the even \(2\pi\)-periodic function
\[
\Gamma(x)=a+b\cos x,
\qquad x\in(-\pi,\pi),
\]
where \(a,b\in\R\) are to be chosen. Since \(\Gamma\in C^\infty(\T)\), one has
\(\Gamma\in H^{1/2}(\T)\).

First,
\[
\ell_0(\Gamma)=\int_{-\pi}^{\pi}(a+b\cos x)\,dx=2\pi a,
\]
hence
\[
a=\frac{L_0}{2\pi}.
\]

Next,
\[
\ell_2(\Gamma)
=
\int_{-\pi}^{\pi}x^2(a+b\cos x)\,dx
=
a\int_{-\pi}^{\pi}x^2\,dx+b\int_{-\pi}^{\pi}x^2\cos x\,dx.
\]
We compute
\[
\int_{-\pi}^{\pi}x^2\,dx=\frac{2\pi^3}{3},
\qquad
\int_{-\pi}^{\pi}x^2\cos x\,dx=-4\pi.
\]
Therefore
\[
\ell_2(\Gamma)=a\frac{2\pi^3}{3}-4\pi b.
\]
Substituting \(a=L_0/(2\pi)\) and imposing \(\ell_2(\Gamma)=M_0\), we obtain
\[
M_0=\frac{L_0\pi^2}{3}-4\pi b,
\]
so that
\[
b=\frac{\frac{L_0\pi^2}{3}-M_0}{4\pi}.
\]
Thus \(\Gamma\in\mathcal A\), and the admissible set is nonempty.
\end{proof}

We can now prove the main variational result.

\begin{proof}[Proof of Theorem~\ref{thm:exist-unique}]
By Lemma~\ref{lem:A-nonempty}, the admissible set is nonempty. Since
\[
J(\Gamma)=\|(-\Delta)^{1/4}\Gamma\|_{L^2(\T)}^2\ge 0,
\]
the functional is bounded below on \(\mathcal A\). Let \((\Gamma_n)\subset\mathcal A\) be a
minimising sequence, so that
\[
J(\Gamma_n)\to \inf_{\Gamma\in\mathcal A}J(\Gamma).
\]
In particular, there exists \(C_0>0\) such that \(J(\Gamma_n)\le C_0\) for all \(n\).

\smallskip

\noindent\textit{Step 1. Boundedness of the minimising sequence in \(H^{1/2}(\T)\).}
Recall that
\[
\|\Gamma_n\|_{H^{1/2}}^2
=
\sum_{k\in\Z}(1+|k|^2)^{1/2}|\widehat{\Gamma_n}(k)|^2
=
|\widehat{\Gamma_n}(0)|^2+
\sum_{k\neq 0}(1+|k|^2)^{1/2}|\widehat{\Gamma_n}(k)|^2.
\]
For \(k\neq 0\),
\[
(1+|k|^2)^{1/2}\le \sqrt2\,|k|,
\]
hence
\[
\sum_{k\neq 0}(1+|k|^2)^{1/2}|\widehat{\Gamma_n}(k)|^2
\le
\sqrt2\sum_{k\in\Z}|k|\,|\widehat{\Gamma_n}(k)|^2.
\]
Using \eqref{eq:duality-norm-fourier},
\[
J(\Gamma_n)=2\pi\sum_{k\in\Z}|k|\,|\widehat{\Gamma_n}(k)|^2,
\]
so
\[
\sum_{k\neq 0}(1+|k|^2)^{1/2}|\widehat{\Gamma_n}(k)|^2
\le
\frac{\sqrt2}{2\pi}C_0.
\]

It remains to control the zero mode. Since \(\Gamma_n\in\mathcal A\),
\[
\ell_0(\Gamma_n)=\int_{-\pi}^{\pi}\Gamma_n(x)\,dx=L_0.
\]
Hence
\[
\widehat{\Gamma_n}(0)=\frac1{2\pi}\int_{-\pi}^{\pi}\Gamma_n(x)\,dx=\frac{L_0}{2\pi},
\]
and therefore
\[
|\widehat{\Gamma_n}(0)|^2=\frac{L_0^2}{4\pi^2}.
\]
Combining these estimates, we find
\[
\|\Gamma_n\|_{H^{1/2}}^2
\le
\frac{L_0^2}{4\pi^2}+\frac{\sqrt2}{2\pi}C_0.
\]
Thus \((\Gamma_n)\) is bounded in \(H^{1/2}(\T)\).

\smallskip

\noindent\textit{Step 2. Weak compactness and passage to the constraints.}
Since \(H^{1/2}(\T)\) is a Hilbert space, after passing to a subsequence we may assume
\[
\Gamma_n\rightharpoonup \Gamma_\ast
\qquad\text{weakly in }H^{1/2}(\T).
\]
Because \(\ell_0\) and \(\ell_2\) are continuous linear functionals on \(H^{1/2}(\T)\), we may
pass to the limit in the constraints:
\[
\ell_0(\Gamma_\ast)=\lim_{n\to\infty}\ell_0(\Gamma_n)=L_0,
\qquad
\ell_2(\Gamma_\ast)=\lim_{n\to\infty}\ell_2(\Gamma_n)=M_0.
\]
Hence \(\Gamma_\ast\in\mathcal A\).

\smallskip

\noindent\textit{Step 3. Weak lower semicontinuity of \(J\).}
Define
\[
T:=(-\Delta)^{1/4}:H^{1/2}(\T)\to L^2(\T).
\]
By \eqref{eq:duality-norm-fourier},
\[
\|T\Gamma\|_{L^2(\T)}^2
=
2\pi\sum_{k\in\Z}|k|\,|\hat\Gamma(k)|^2
\le
2\pi\sum_{k\in\Z}(1+|k|^2)^{1/2}|\hat\Gamma(k)|^2
=
2\pi\|\Gamma\|_{H^{1/2}(\T)}^2,
\]
so \(T\) is bounded. Moreover,
\[
J(\Gamma)=\|T\Gamma\|_{L^2(\T)}^2.
\]

Since \(\Gamma_n\rightharpoonup\Gamma_\ast\) weakly in \(H^{1/2}(\T)\) and \(T\) is bounded and
linear, we have
\[
T\Gamma_n\rightharpoonup T\Gamma_\ast
\qquad\text{weakly in }L^2(\T).
\]
The \(L^2\)-norm is weakly lower semicontinuous, hence
\[
J(\Gamma_\ast)=\|T\Gamma_\ast\|_{L^2(\T)}^2
\le
\liminf_{n\to\infty}\|T\Gamma_n\|_{L^2(\T)}^2
=
\liminf_{n\to\infty}J(\Gamma_n).
\]
Therefore \(\Gamma_\ast\) is a minimiser.

\smallskip

\noindent\textit{Step 4. Uniqueness.}
Let \(\Gamma_1,\Gamma_2\in\mathcal A\) be minimisers and set
\[
w:=\Gamma_2-\Gamma_1.
\]
Since the constraints are linear, \(\mathcal A\) is affine. Hence
\[
\Gamma_1+tw\in\mathcal A
\qquad\text{for every }t\in\R.
\]
Using bilinearity and Lemma~\ref{lem:half-lap-symmetric},
\[
J(\Gamma_1+tw)
=
J(\Gamma_1)+2t\langle (-\Delta)^{1/2}\Gamma_1,w\rangle+t^2J(w).
\]
Since \(\Gamma_1\) is a minimiser and \(t=0\) is a minimum of the function
\(t\mapsto J(\Gamma_1+tw)\), its derivative at \(0\) vanishes. Therefore
\[
\langle (-\Delta)^{1/2}\Gamma_1,w\rangle=0.
\]
Thus
\[
J(\Gamma_1+tw)=J(\Gamma_1)+t^2J(w).
\]
Taking \(t=1\), we obtain
\[
J(\Gamma_2)=J(\Gamma_1)+J(w).
\]
Since both \(\Gamma_1\) and \(\Gamma_2\) are minimisers, it follows that \(J(w)=0\).

By \eqref{eq:duality-norm-fourier},
\[
J(w)=2\pi\sum_{k\in\Z}|k|\,|\widehat w(k)|^2,
\]
hence \(\widehat w(k)=0\) for all \(k\neq 0\). Moreover,
\[
\ell_0(w)=0,
\]
so
\[
\widehat w(0)=\frac1{2\pi}\int_{-\pi}^{\pi}w(x)\,dx=0.
\]
Therefore all Fourier coefficients of \(w\) vanish, and hence \(w=0\). Thus
\(\Gamma_1=\Gamma_2\), proving uniqueness.
\end{proof}

\section{Euler--Lagrange equation and regularity}

Let
\[
\mathcal V
:=
\{\varphi\in H^{1/2}(\T):\ell_0(\varphi)=0,\ \ell_2(\varphi)=0\}.
\]

\begin{proof}[Proof of Theorem~\ref{thm:EL}]
Let \(\Gamma_\ast\) be the unique minimiser. For every \(\varphi\in\mathcal V\) and
\(\varepsilon\in\R\), the perturbed function \(\Gamma_\ast+\varepsilon\varphi\) belongs to
\(\mathcal A\). Hence the map
\[
\varepsilon\mapsto J(\Gamma_\ast+\varepsilon\varphi)
\]
has a minimum at \(\varepsilon=0\).

Expanding the energy gives
\begin{align*}
J(\Gamma_\ast+\varepsilon\varphi)
&=
\bigl\langle
(-\Delta)^{1/2}(\Gamma_\ast+\varepsilon\varphi),
\Gamma_\ast+\varepsilon\varphi
\bigr\rangle \\
&=
J(\Gamma_\ast)
+
\varepsilon\Bigl(
\langle (-\Delta)^{1/2}\varphi,\Gamma_\ast\rangle
+
\langle (-\Delta)^{1/2}\Gamma_\ast,\varphi\rangle
\Bigr)
+
\varepsilon^2J(\varphi).
\end{align*}
Differentiating at \(\varepsilon=0\), we obtain
\[
\langle (-\Delta)^{1/2}\varphi,\Gamma_\ast\rangle
+
\langle (-\Delta)^{1/2}\Gamma_\ast,\varphi\rangle
=0,
\qquad\forall\varphi\in\mathcal V.
\]
By symmetry, Lemma~\ref{lem:half-lap-symmetric},
\[
\langle (-\Delta)^{1/2}\Gamma_\ast,\varphi\rangle=0,
\qquad\forall\varphi\in\mathcal V.
\]

Thus \((-\Delta)^{1/2}\Gamma_\ast\) belongs to the annihilator of
\[
\mathcal V=\ker\ell_0\cap\ker\ell_2.
\]
Since \(\ell_0\) and \(\ell_2\) are linearly independent continuous linear functionals, the
subspace \(\mathcal V\) has codimension \(2\). Consequently its annihilator in
\(H^{-1/2}(\T)\) is two-dimensional and spanned by \(\ell_0\) and \(\ell_2\). Hence there
exist constants \(\lambda_0,\lambda_2\in\R\) such that
\[
(-\Delta)^{1/2}\Gamma_\ast=\lambda_0\ell_0+\lambda_2\ell_2.
\]
Equivalently,
\[
(-\Delta)^{1/2}\Gamma_\ast=\lambda_0+\lambda_2x^2
\qquad\text{in }H^{-1/2}(\T).
\]
\end{proof}

We next analyse the regularity of the minimiser. Since \(x^2\) is interpreted as a
\(2\pi\)-periodic function, the right-hand side of the Euler--Lagrange equation is not smooth at
\(\pm\pi\). Thus one should not expect the minimiser to be smooth on the torus. The correct
regularity is obtained from the decay of the Fourier coefficients.

\begin{proof}[Proof of Theorem~\ref{thm:regularity}]
Write
\[
\Gamma_\ast(x)=\sum_{k\in\Z}\widehat{\Gamma_\ast}(k)e^{ikx},
\qquad
F(x):=\lambda_0+\lambda_2x^2=\sum_{k\in\Z}\hat F(k)e^{ikx}.
\]
Taking Fourier coefficients in the Euler--Lagrange equation yields
\[
|k|\,\widehat{\Gamma_\ast}(k)=\hat F(k),
\qquad k\in\Z.
\]

The periodic function \(x^2\) has the Fourier expansion
\[
x^2=\frac{\pi^2}{3}+4\sum_{k=1}^{\infty}\frac{(-1)^k}{k^2}\cos(kx),
\]
so that
\[
\hat F(0)=\lambda_0+\lambda_2\frac{\pi^2}{3},
\qquad
\hat F(k)=\frac{2\lambda_2(-1)^k}{k^2}
\qquad (k\neq 0).
\]
Hence
\[
|\hat F(k)|\lesssim |k|^{-2}
\qquad (k\neq 0),
\]
and therefore
\[
|\widehat{\Gamma_\ast}(k)|
=
\frac{|\hat F(k)|}{|k|}
\lesssim |k|^{-3}
\qquad (k\neq 0).
\]

Let \(s<5/2\). Then
\[
\sum_{k\in\Z}(1+|k|^2)^s|\widehat{\Gamma_\ast}(k)|^2
\lesssim
\sum_{k\neq 0}|k|^{2s}|k|^{-6}
=
\sum_{k\neq 0}|k|^{2s-6}.
\]
Since \(2s-6<-1\) when \(s<5/2\), this series converges. Hence
\[
\Gamma_\ast\in H^s(\T)
\qquad\text{for every }s<\frac52.
\]

The Sobolev embedding theorem in one dimension implies that if
\(s>3/2\), then
\[
H^s(\T)\hookrightarrow C^{1,\alpha}(\T)
\qquad\text{for every }\alpha<s-\frac32.
\]
Fix \(\alpha<1\). Choosing \(s\) so that
\[
\frac32+\alpha<s<\frac52,
\]
we conclude that
\[
\Gamma_\ast\in C^{1,\alpha}(\T).
\]
\end{proof}

To pass from the weak equation to a pointwise one on the torus, we use the following regularity
fact.

\begin{lemma}\label{lem:frac-lap-regularity}
Let \(0<\alpha<1\). If \(\Gamma\in C^{1,\alpha}(\T)\), then
\[
(-\Delta)^{1/2}\Gamma\in C^\alpha(\T).
\]
\end{lemma}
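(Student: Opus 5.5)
The plan is to write $(-\Delta)^{1/2}$ on the torus as the periodic Hilbert transform composed with a derivative, and then invoke Privalov's theorem. The Hilbert transform $\mathcal H$ on $\T$ is the Fourier multiplier $-i\,\sign(k)$, and $\widehat{\Gamma'}(k)=ik\hat\Gamma(k)$. Hence
\[
\widehat{\mathcal H\Gamma'}(k)=-i\,\sign(k)\,ik\,\hat\Gamma(k)=|k|\hat\Gamma(k),
\]
which gives $(-\Delta)^{1/2}\Gamma=\mathcal H(\Gamma')$ as distributions on $\T$. This is the same computation as in the proof of Proposition~\ref{prop:J-classical-to-spectral}, where $\widehat K(k)=-i\pi\,\sign(k)$. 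If $\Gamma\in C^{1,\alpha}(\T)$, then $\Gamma'\in C^\alpha(\T)$ with $\|\Gamma'\|_{C^\alpha}\le\|\Gamma\|_{C^{1,\alpha}}$. It therefore suffices to show that $\mathcal H$ maps $C^\alpha(\T)$ boundedly into itself for $0<\alpha<1$.

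For that boundedness I will use the kernel representation
\[
\mathcal Hf(x)=\frac{1}{2\pi}\pv\!\int_{-\pi}^{\pi}f(x-t)\cot\frac t2\,dt
=\frac1{2\pi}\int_{-\pi}^{\pi}\bigl(f(x-t)-f(x)\bigr)\cot\frac t2\,dt,
\]
where the second form uses the oddness of $\cot(t/2)$. The new integrand is bounded by $C[f]_\alpha|t|^{\alpha-1}$, so it is absolutely integrable and $\|\mathcal Hf\|_\infty\lesssim[f]_\alpha$.

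For the Hölder seminorm, fix $x,y$ and set $h=|x-y|\le\pi$. I will split the integral into the near region $|t|<2h$ and the far region $2h\le|t|\le\pi$, and write $\mathcal Hf(x)-\mathcal Hf(y)$ using the subtracted form at each point.
\begin{itemize}
\item On the near region, each of the two terms is bounded separately by $\int_{|t|<2h}[f]_\alpha|t|^{\alpha-1}\,dt\lesssim h^\alpha$.
\item On the far region, I write the difference as
\[
\int_{2h\le|t|\le\pi}\bigl(f(x-t)-f(y-t)\bigr)\cot\frac t2\,dt\;-\;\bigl(f(x)-f(y)\bigr)\int_{2h\le|t|\le\pi}\cot\frac t2\,dt .
\]
The second integral vanishes by oddness.
\item In the first integral, I shift the variable $t\mapsto t+(x-y)$ in the $f(y-t)$ part to move the difference onto the kernel. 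What remains is the kernel difference $|\cot\frac t2-\cot\frac{t+h'}2|\lesssim h/t^2$, multiplied by $|f(x-t)-f(x)|\lesssim|t|^\alpha$, plus boundary terms from the shifted domain, each of size $\lesssim h^\alpha$.
\item The kernel-difference term is bounded by $\int_{2h}^{\pi}h\,t^{\alpha-2}\,dt\lesssim h^\alpha$, where $\alpha<1$ is used.
\end{itemize}

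The main obstacle is the far-region estimate. The careful step is arranging the shift so that the cancellation uses only $[f]_\alpha$ together with the smoothness of the kernel away from $0$; the bookkeeping of the boundary pieces is where errors creep in. Alternatively, I can simply cite Privalov's theorem (see \cite{katznelson}) for the $C^\alpha$-boundedness of the conjugate function, and I will present that as the primary justification, sketching the above estimate for completeness. Combining the two steps gives $\|(-\Delta)^{1/2}\Gamma\|_{C^\alpha}\lesssim\|\Gamma\|_{C^{1,\alpha}}$.
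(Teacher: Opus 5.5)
Your proof is correct, but it takes a different route from the paper. The paper's proof of this lemma is purely a citation of Theorem~1.4 in \cite{Roncal} (H\"older estimates for fractional powers of the Laplacian on the torus), specialised to $k=1$, $\sigma=1$. You instead use a structure peculiar to order $1/2$ in one dimension: the factorisation $(-\Delta)^{1/2}=\mathcal H\circ\partial_x$, read off from $-i\sign(k)\cdot ik=|k|$. This is the same computation as $\widehat K(k)=-i\pi\sign(k)$ in Proposition~\ref{prop:J-classical-to-spectral}, and it reduces the lemma to Privalov's theorem on the $C^\alpha$-boundedness of the conjugate function for $0<\alpha<1$. The citation buys generality (arbitrary fractional orders, higher dimensions, higher H\"older classes), but it is a black box whose normalisation must be matched to the paper's $(-\Delta)^{1/2}$. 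Your argument is self-contained modulo one classical one-variable theorem and gives the quantitative bound $\|(-\Delta)^{1/2}\Gamma\|_{C^\alpha}\lesssim[\Gamma']_{C^\alpha}$. It also exhibits $(-\Delta)^{1/2}\Gamma(x)$ as the absolutely convergent integral $\frac1{2\pi}\int_{-\pi}^{\pi}\bigl(\Gamma'(x-t)-\Gamma'(x)\bigr)\cot\frac t2\,dt$, so you get continuity and an explicit pointwise formula together. Your sketched proof of Privalov's estimate is sound as well. The one implicit step is re-inserting $f(x)$ after the shift in the far region; this is legitimate because both $\int_{2h\le|t|\le\pi}\cot\frac t2\,dt$ and its translate vanish. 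The boundary pieces are then $O(h^\alpha)$ near $|t|=2h$ and $O(h)\lesssim h^\alpha$ near $|t|=\pi$.
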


\begin{proof}
See Theorem~1.4 in \cite{Roncal}. Taking \(k=1\) and \(\sigma=1\) yields the claim.
\end{proof}

\begin{proposition}\label{prop:EL-pointwise}
Let \(\Gamma_\ast\) be the unique minimiser. Then there exist constants
\(\lambda_0,\lambda_2\in\R\) such that
\[
(-\Delta)^{1/2}\Gamma_\ast(x)=\lambda_0+\lambda_2x^2
\qquad\text{for all }x\in\T.
\]
\end{proposition}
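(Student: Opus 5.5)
The plan is to upgrade the distributional identity of Theorem~\ref{thm:EL} to a pointwise one by showing that both sides are continuous functions on \(\T\) which agree as distributions. Two continuous functions that coincide in \(\mathcal D'(\T\)) coincide everywhere.

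First, by Theorem~\ref{thm:regularity}, \(\Gamma_\ast\in C^{1,\alpha}(\T)\) for every \(\alpha<1\). Fixing some \(\alpha\in(0,1)\), Lemma~\ref{lem:frac-lap-regularity} gives \(G:=(-\Delta)^{1/2}\Gamma_\ast\in C^\alpha(\T)\); in particular \(G\) is a continuous function. On the other side, \(F(x)=\lambda_0+\lambda_2x^2\), understood as the \(2\pi\)-periodic extension from \((-\pi,\pi]\), is continuous on \(\T\), since \(x^2\) takes the same value \(\pi^2\) at \(\pm\pi\) (it is Lipschitz but not \(C^1\) there). So both sides of the claimed identity are continuous periodic functions.

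Next, I will check that the identity in \(H^{-1/2}(\T)\) from Theorem~\ref{thm:EL} is really an identity of Fourier coefficients, \(|k|\widehat{\Gamma_\ast}(k)=\hat F(k)\) for all \(k\). This is the form already used in the proof of Theorem~\ref{thm:regularity}. Since \(G\) and \(F\) are in \(L^2\), their Fourier coefficients are the classical integrals. The coefficients of \(G\) are \(|k|\widehat{\Gamma_\ast}(k)\) by the multiplier definition, so \(\hat G(k)=\hat F(k)\) for every \(k\in\Z\). By uniqueness of Fourier coefficients in \(L^2(\T)\) we get \(G=F\) almost everywhere, and continuity of both sides upgrades this to equality at every \(x\in\T\).

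The main subtlety lies in the statement of Theorem~\ref{thm:EL}, which writes the right-hand side as \(\lambda_0\ell_0+\lambda_2\ell_2\), i.e.\ as functionals. I must make sure that this functional is represented, via the \(L^2\) pairing \(\varphi\mapsto\int_{-\pi}^{\pi}F\varphi\,dx\), by the function \(F\), and that this representation matches the convention for the \(H^{-1/2}\)--\(H^{1/2}\) pairing used in Lemma~\ref{lem:half-lap-symmetric}. A possible factor of \(2\pi\) or a complex conjugation should only rescale \(\lambda_0,\lambda_2\), which are arbitrary constants anyway. As a fallback I could avoid Lemma~\ref{lem:frac-lap-regularity} entirely: the bound \(|\widehat{\Gamma_\ast}(k)|\lesssim|k|^{-3}\) gives \(|\hat G(k)|\lesssim|k|^{-2}\), which is summable. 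Hence the Fourier series of \(G\) converges absolutely and uniformly to a continuous function, and the same comparison of coefficients applies.
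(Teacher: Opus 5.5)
Your proposal is correct and follows essentially the same route as the paper. Both arguments get continuity of $(-\Delta)^{1/2}\Gamma_\ast$ from Theorem~\ref{thm:regularity} and Lemma~\ref{lem:frac-lap-regularity}, note that the periodic $\lambda_0+\lambda_2x^2$ is continuous, and upgrade the identity of Theorem~\ref{thm:EL} in $\mathcal D'(\T)$ to a pointwise one. Your fallback, using absolute summability of $|k|\widehat{\Gamma_\ast}(k)=O(|k|^{-2})$, is a pleasant self-contained way to obtain continuity without relying on the cited H\"older estimate.
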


\begin{proof}
By Theorem~\ref{thm:regularity}, \(\Gamma_\ast\in C^{1,\alpha}(\T)\) for every \(\alpha<1\).
Hence, by Lemma~\ref{lem:frac-lap-regularity},
\[
(-\Delta)^{1/2}\Gamma_\ast\in C^\alpha(\T).
\]
On the other hand, Theorem~\ref{thm:EL} asserts that
\[
(-\Delta)^{1/2}\Gamma_\ast=\lambda_0+\lambda_2x^2
\qquad\text{in }\mathcal D'(\T).
\]
Since both sides are continuous and coincide as distributions, they coincide pointwise.
\end{proof}

\section{The classical equation on \texorpdfstring{$(-1,1)$}{(-1,1)} and explicit solution}

We now return to the interval \((-1,1)\). Since \(\Gamma_\ast\in C^{1,\alpha}(\T)\), the
principal value integral is well defined pointwise, and the weak Euler--Lagrange equation
upgrades to a classical one.

We first extend Proposition~\ref{prop:J-classical-to-spectral} from smooth to
\(C^{1,\alpha}\)-regular functions.

\begin{proposition}\label{prop:equiv-C1a}
Let \(0<\alpha<1\) and let \(\Gamma\in C^{1,\alpha}(\T)\). Define
\[
(B\Gamma)(x):=\pv\!\int_{-\pi}^{\pi}\frac{\Gamma'(y)}{x-y}\,dy.
\]
Then
\[
\int_{-\pi}^{\pi}\Gamma(x)(B\Gamma)(x)\,dx=\pi J(\Gamma).
\]
\end{proposition}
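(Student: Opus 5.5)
The plan is to approximate and pass to the limit, using Proposition~\ref{prop:J-classical-to-spectral} for smooth functions. Fix \(\Gamma\in C^{1,\alpha}(\T)\). Let \(\Gamma_\varepsilon:=\Gamma*\rho_\varepsilon\), where \(\rho_\varepsilon\) is a standard periodic mollifier. Then \(\Gamma_\varepsilon\in C^\infty(\T)\), and Proposition~\ref{prop:J-classical-to-spectral} gives
\[
\int_{-\pi}^{\pi}\Gamma_\varepsilon(x)(B\Gamma_\varepsilon)(x)\,dx=\pi J(\Gamma_\varepsilon).
\]
It then suffices to show two things: the right-hand side converges to \(\pi J(\Gamma)\), and the left-hand side converges to \(\int\Gamma\,B\Gamma\,dx\).

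\textbf{Right-hand side.} This is soft. Since \(C^{1,\alpha}(\T)\subset H^1(\T)\subset H^{1/2}(\T)\), and mollification converges in \(H^1\), we have \(\Gamma_\varepsilon\to\Gamma\) in \(H^{1/2}(\T)\). The bound in Proposition~\ref{prop:J-finite}, applied via polarization, shows that \(J\) is continuous on \(H^{1/2}(\T)\). Hence \(J(\Gamma_\varepsilon)\to J(\Gamma)\).

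\textbf{Left-hand side.} This is the main step.

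First, \(\Gamma_\varepsilon\to\Gamma\) uniformly, so it suffices to show that \(B\Gamma_\varepsilon\to B\Gamma\) in \(L^1(-\pi,\pi)\), or even uniformly on compact subsets together with a uniform \(L^1\) bound.

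Write \(g:=\Gamma'\in C^\alpha(\T)\) and \(g_\varepsilon:=\Gamma_\varepsilon'=g*\rho_\varepsilon\). Then \(g_\varepsilon\to g\) in \(C^\beta\) for every \(\beta<\alpha\), with \(\|g_\varepsilon\|_{C^\alpha}\le\|g\|_{C^\alpha}\).

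For \(x\in(-\pi,\pi)\), split the principal value integral as
\[
(Bg)(x)=\int_{-\pi}^{\pi}\frac{g(y)-g(x)}{x-y}\,dy+g(x)\log\frac{\pi+x}{\pi-x}.
\]
Here the principal value of \(\int_{-\pi}^{\pi}\frac{dy}{x-y}\) has been computed explicitly.

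The first term is an absolutely convergent integral. Its integrand is dominated by \(\|g\|_{C^\alpha}|x-y|^{\alpha-1}\), uniformly in \(\varepsilon\). Dominated convergence, with \(g_\varepsilon\to g\) pointwise, then gives convergence for each \(x\), and the term is bounded uniformly in \(x\) and \(\varepsilon\).

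The second term has a logarithmic singularity at \(x=\pm\pi\). This singularity is integrable and multiplied by the uniformly bounded \(g_\varepsilon\). So dominated convergence in \(x\) applies, with dominating function \(C\bigl(1+|\log(\pi-|x|)|\bigr)\), which lies in \(L^1\).

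Combining the two terms, \(\Gamma_\varepsilon\,B\Gamma_\varepsilon\to\Gamma\,B\Gamma\) pointwise on \((-\pi,\pi)\), with an integrable dominating function. One more application of dominated convergence gives convergence of the left-hand side, and the identity follows.

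\textbf{Expected obstacle.} The delicate point is exactly this endpoint issue. The kernel \(1/(x-y)\) on \((-\pi,\pi)\) is not the periodic kernel, so \(B\Gamma\) is genuinely unbounded near \(\pm\pi\) unless \(\Gamma'(\pm\pi)=0\).

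The proof of Proposition~\ref{prop:J-classical-to-spectral} treats \(K\) as a periodic distribution. In the limit argument I would reuse that identification as is and only track the logarithmic term for integrability. If needed, I would instead note that \(B\Gamma_\varepsilon\to B\Gamma\) in \(\mathcal D'\), because convolution with \(K\) is continuous there. The uniform domination above then upgrades this to convergence of the pairing against the continuous function \(\Gamma\).
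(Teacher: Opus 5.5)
\textbf{The step that fails is the base case, and the statement itself is false.} You take Proposition~\ref{prop:J-classical-to-spectral} as given, and in your last paragraph you say you would ``reuse that identification as is''. That identification is $B\Gamma=\Gamma'*K$ on $\T$ with $\widehat{B\Gamma}(k)=\pi|k|\hat\Gamma(k)$, and it is false.

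Your own ``expected obstacle'' explains why. For $x,y\in(-\pi,\pi)$ the difference $x-y$ ranges over $(-2\pi,2\pi)$, so $1/(x-y)$ is not $K(x-y)$ for any $2\pi$-periodic $K$, and $B$ is not a Fourier multiplier on $\T$. Your splitting also shows that $B\Gamma$ has a logarithmic singularity at $\pm\pi$ whenever $\Gamma'(\pm\pi)\neq0$ (e.g.\ $\Gamma=\sin x$). But $\pi(-\Delta)^{1/2}\Gamma$ is smooth for smooth $\Gamma$, so the two cannot agree. Even the periodised kernel $\pv(1/t)$ does not help: for $k>0$ its Fourier coefficients are proportional to $\int_0^{k\pi}\frac{\sin t}{t}\,dt$, which is not constant in $k$. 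The exact multiplier $\pi|k|$ belongs to the kernel $\frac12\cot\frac{x-y}{2}$, not to $\frac1{x-y}$.

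The integrated identity fails too. Take $\Gamma=\cos x$, so $J(\Gamma)=\pi$ and the right-hand side is $\pi^2\approx 9.87$. Write the left-hand side as the limit of the integrals over $\{|x-y|>\varepsilon\}$ (dominated convergence, as in your argument) and antisymmetrise in $(x,y)$. This gives
\[
\int_{-\pi}^{\pi}\Gamma\,B\Gamma\,dx
=\frac12\int_{-\pi}^{\pi}\!\int_{-\pi}^{\pi}\frac{\sin(x-y)}{x-y}\,dy\,dx
=\int_0^{2\pi}(2\pi-t)\frac{\sin t}{t}\,dt
=2\pi\int_0^{2\pi}\frac{\sin t}{t}\,dt\approx 8.91 .
\]
This is not a normalisation constant. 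For $\cos 2x$ the same computation gives $4\pi\int_0^{4\pi}\frac{\sin t}{t}\,dt\approx 18.75$, against $\pi J=2\pi^2\approx 19.74$. So the identity already fails for trigonometric polynomials, and no approximation argument can produce it.

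\textbf{What is sound in your proposal.} Your reduction from $C^{1,\alpha}(\T)$ to $C^\infty(\T)$ is valid, and your endpoint term $g(x)\log\frac{\pi+x}{\pi-x}$ is correct. That term refutes two further claims in the paper's own proof: that $\pv\int_{-\pi}^{\pi}\frac{dy}{x-y}=0$, and that $B\Gamma\in L^\infty$. Apart from these, the paper simply reruns the Fourier computation of Proposition~\ref{prop:J-classical-to-spectral}, so it breaks at the same place as your proposal.

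\textbf{How the statement can be repaired.} Define $B$ with the periodic kernel $\frac12\cot\frac{x-y}{2}$ in place of $\frac1{x-y}$. Then $\widehat{B\Gamma}(k)=\pi|k|\hat\Gamma(k)$ holds exactly, and Parseval gives the identity for smooth $\Gamma$. Your mollification argument then carries it over to $C^{1,\alpha}(\T)$, and there is no logarithmic endpoint term to control.
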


\begin{proof}
The proof is the same as in Proposition~\ref{prop:J-classical-to-spectral}, once one checks
that the principal value integral defining \(B\Gamma\) is well defined and that Parseval's
identity applies.

Since \(\Gamma\in C^{1,\alpha}(\T)\), we have \(\Gamma'\in C^\alpha(\T)\). Hence, for each
fixed \(x\in(-\pi,\pi)\),
\[
\pv\!\int_{-\pi}^{\pi}\frac{\Gamma'(y)}{x-y}\,dy
=
\int_{-\pi}^{\pi}\frac{\Gamma'(y)-\Gamma'(x)}{x-y}\,dy,
\]
because the principal value of \(\Gamma'(x)/(x-y)\) vanishes. Moreover,
\[
\left|\frac{\Gamma'(y)-\Gamma'(x)}{x-y}\right|
\le
[\Gamma']_{C^\alpha}|x-y|^{\alpha-1},
\]
and the right-hand side is integrable since \(\alpha\in(0,1)\). Thus \(B\Gamma(x)\) is well
defined for every \(x\in(-\pi,\pi)\).

As before, with
\[
K(t):=\pv\!\Bigl(\frac1t\Bigr),
\]
viewed as a periodic distribution on \(\T\), one has
\[
B\Gamma=\Gamma'*K
\qquad\text{in }\mathcal D'(\T).
\]
Hence
\[
\widehat{B\Gamma}(k)=\widehat{\Gamma'}(k)\,\widehat K(k)=\pi|k|\,\hat\Gamma(k).
\]

It remains to justify Parseval's identity. Since \(\Gamma\in C^{1,\alpha}(\T)\), certainly
\(\Gamma\in L^2(\T)\). Also,
\[
|B\Gamma(x)|
\le
[\Gamma']_{C^\alpha}\int_{-\pi}^{\pi}|x-y|^{\alpha-1}\,dy
\le C,
\]
so \(B\Gamma\in L^\infty(\T)\subset L^2(\T)\). Therefore Parseval applies:
\[
\int_{-\pi}^{\pi}\Gamma(x)(B\Gamma)(x)\,dx
=
2\pi\sum_{k\in\Z}\hat\Gamma(k)\,\overline{\widehat{B\Gamma}(k)}
=
2\pi^2\sum_{k\in\Z}|k|\,|\hat\Gamma(k)|^2
=
\pi J(\Gamma).
\]
\end{proof}

We may now derive the classical Euler--Lagrange equation on the interval.

\begin{proposition}\label{prop:EL-classical-final}
Let \(\Gamma_\ast\) be the minimiser, rescaled to \((-1,1)\). Then
\(\Gamma_\ast\in C^{1,\alpha}(-1,1)\) for every \(\alpha<1\), and there exist constants
\(\lambda_0,\lambda_2\in\R\) such that
\begin{equation}\label{eq:EL-classical-final}
\pv\!\int_{-1}^{1}\frac{\Gamma_\ast'(y)}{x-y}\,dy
=
\lambda_0+\lambda_2x^2,
\qquad x\in(-1,1).
\end{equation}
\end{proposition}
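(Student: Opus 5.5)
The plan is to combine the pointwise Euler--Lagrange equation on the torus (Proposition~\ref{prop:EL-pointwise}) with the identification of the Biot--Savart operator as $\pi(-\Delta)^{1/2}$ at the $C^{1,\alpha}$ level (Proposition~\ref{prop:equiv-C1a}), and then rescale. The regularity part is immediate. By Theorem~\ref{thm:regularity}, $\Gamma_\ast\in C^{1,\alpha}(\T)$ for every $\alpha<1$. The affine map $x\mapsto \pi x$ from $(-1,1)$ onto $(-\pi,\pi)$ preserves H\"older classes up to constants. Hence the rescaled function $\tilde\Gamma(x):=\Gamma_\ast(\pi x)$ lies in $C^{1,\alpha}(-1,1)$, with the same $\alpha$.

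Next we derive the equation on the torus. For $\Gamma_\ast\in C^{1,\alpha}(\T)$, the proof of Proposition~\ref{prop:equiv-C1a} shows three things. First, $B\Gamma_\ast(x)$ is well defined pointwise for every $x\in(-\pi,\pi)$. Second, $B\Gamma_\ast$ is bounded. Third, $\widehat{B\Gamma_\ast}(k)=\pi|k|\widehat{\Gamma_\ast}(k)$. Together these give $B\Gamma_\ast=\pi(-\Delta)^{1/2}\Gamma_\ast$ as distributions.

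To pass to a pointwise identity, we need $B\Gamma_\ast$ to be continuous. We obtain this from the representation
\[
B\Gamma_\ast(x)=\int_{-\pi}^{\pi}\frac{\Gamma_\ast'(y)-\Gamma_\ast'(x)}{x-y}\,dy+\Gamma_\ast'(x)\log\frac{\pi+x}{\pi-x}.
\]
The first term is continuous in $x$, by the standard H\"older estimate for the Hilbert transform on an interval. The second term is continuous on the open interval. Proposition~\ref{prop:EL-pointwise} states that $(-\Delta)^{1/2}\Gamma_\ast=\lambda_0+\lambda_2x^2$ pointwise. Two continuous functions that agree as distributions agree everywhere, so
\[
\pv\!\int_{-\pi}^{\pi}\frac{\Gamma_\ast'(y)}{x-y}\,dy=\pi\lambda_0+\pi\lambda_2x^2,\qquad x\in(-\pi,\pi).
\]

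Finally we rescale. Substitute $x=\pi s$ and $y=\pi t$ with $s,t\in(-1,1)$, and note that $\Gamma_\ast'(\pi t)=\pi^{-1}\tilde\Gamma'(t)$ and $dy=\pi\,dt$. A direct computation then gives
\[
\pv\!\int_{-1}^{1}\frac{\tilde\Gamma'(t)}{s-t}\,dt=\pi\lambda_0+\pi^3\lambda_2 s^2.
\]
This is \eqref{eq:EL-classical-final} after renaming the constants.

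The main obstacle is the second step: checking that the periodic distributional convolution $\Gamma'*K$ really coincides with the non-periodic principal value over $(-\pi,\pi)$. The kernel $\pv(1/t)$ restricted to $(-\pi,\pi)$ and periodised differs from the periodic Hilbert kernel $\tfrac12\cot(t/2)$ by a smooth correction. The Fourier symbol $-i\pi\sign(k)$ used in Proposition~\ref{prop:J-classical-to-spectral} is exact only for the cotangent kernel. My plan is to accept the identity $\widehat{B\Gamma}(k)=\pi|k|\hat\Gamma(k)$ as established in Proposition~\ref{prop:equiv-C1a}, as the paper does. If more care is needed, I would write $1/t=\tfrac12\cot(t/2)+r(t)$ with $r$ smooth on $[-\pi,\pi]$. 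The contribution of $r$ is then a smooth term that can be absorbed, or checked to vanish against the relevant modes.
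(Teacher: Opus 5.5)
Your argument is the paper's own route: Theorem~\ref{thm:regularity}, then Proposition~\ref{prop:EL-pointwise}, then the identity $B\Gamma=\pi(-\Delta)^{1/2}\Gamma$ of Proposition~\ref{prop:equiv-C1a}, then rescaling. In one place you are more careful than the paper: on $(-\pi,\pi)$ one has $\pv\int_{-\pi}^{\pi}\frac{dy}{x-y}=\log\frac{\pi+x}{\pi-x}$, which the paper's proof of Proposition~\ref{prop:equiv-C1a} sets to zero. Your rescaling also drops a factor $1/\pi$ on the left, so the right-hand side should be $\pi^2\lambda_0+\pi^4\lambda_2s^2$; this is harmless.

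The genuine gap is the step you flagged yourself, and your fallback does not close it. The torus operator has the cotangent kernel,
\[
\pi(-\Delta)^{1/2}\Gamma(x)=\frac12\pv\int_{-\pi}^{\pi}\Gamma'(y)\cot\frac{x-y}{2}\,dy,
\]
and it does \emph{not} coincide with $B$. For $\Gamma=\cos$ one gets $B\Gamma(0)=2\int_0^{\pi}\frac{\sin y}{y}\,dy\approx 3.70$, whereas $\pi(-\Delta)^{1/2}\Gamma(0)=\pi$. Two things go wrong in the paper's derivation. The torus convolution $\Gamma'*K$ integrates over $(x-\pi,x+\pi)$, not over $(-\pi,\pi)$. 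And the Fourier coefficients of $\pv(1/t)$ on $(-\pi,\pi)$ are proportional to $\int_0^{k\pi}\frac{\sin u}{u}\,du$, not to $\sign(k)$. Set $r(t)=\frac1t-\frac12\cot\frac t2$, which is odd and analytic on $|t|<2\pi$. Then the correct identity is
\[
B\Gamma_\ast(x)=\pi(\lambda_0+\lambda_2x^2)+\int_{-\pi}^{\pi}\Gamma_\ast'(y)\,r(x-y)\,dy,\qquad x\in(-\pi,\pi).
\]
The last term is smooth, but nothing forces it to be a quadratic polynomial. So it can neither be absorbed into $\lambda_0+\lambda_2x^2$ nor shown to vanish.

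In fact no repair is possible, because the conclusion fails whenever $\Gamma_\ast$ is nonconstant. Suppose $\gamma(t)=\Gamma_\ast(\pi t)$ satisfied the stated equation. Tricomi's formula, used exactly as in the proof of Theorem~\ref{thm:EL-general-solution}, gives $\gamma'(t)=(c_1+at+bt^3)/\sqrt{1-t^2}$. Since $\Gamma_\ast\in C^{1,\alpha}(\T)$, $\gamma'$ is bounded on $[-1,1]$. This forces $c_1=0$ and $b=-a$, so $\gamma'(t)=at\sqrt{1-t^2}$. That function is only H\"older-$\frac12$ at $t=\pm1$, which contradicts $\Gamma_\ast\in C^{1,\alpha}(\T)$ for every $\alpha<1$ unless $a=0$. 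Equivalently: $\widehat{\Gamma_\ast}(k)=2\lambda_2(-1)^k|k|^{-3}$, while $(\pi^2-x^2)^{3/2}$ has Fourier coefficients decaying only like $|k|^{-5/2}$.

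Hence the equation can hold only if $\Gamma_\ast\equiv L_0/(2\pi)$, that is, only if $M_0=\pi^2L_0/3$. What the minimiser does satisfy pointwise is the cotangent-kernel equation
\[
\frac12\pv\int_{-\pi}^{\pi}\Gamma_\ast'(y)\cot\frac{x-y}{2}\,dy=\pi(\lambda_0+\lambda_2x^2).
\]
Under rescaling this does not become a finite Hilbert transform equation on $(-1,1)$. The paper's proof rests on the same identity from Proposition~\ref{prop:equiv-C1a}, so it has the same gap. Your proposal inherits the gap rather than introducing it, but by accepting that identity you are accepting a false step.
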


\begin{proof}
Let \(\Gamma_\ast\) denote the minimiser on \(\T\). By Theorem~\ref{thm:regularity},
\[
\Gamma_\ast\in C^{1,\alpha}(\T)
\qquad\text{for every }\alpha<1,
\]
and by Proposition~\ref{prop:EL-pointwise},
\[
(-\Delta)^{1/2}\Gamma_\ast(x)=\lambda_0+\lambda_2x^2,
\qquad x\in\T.
\]

Since \(\Gamma_\ast\in C^{1,\alpha}(\T)\), the principal value integral defining
\(B\Gamma_\ast\) is well defined pointwise. By Proposition~\ref{prop:equiv-C1a},
\[
B\Gamma=\pi(-\Delta)^{1/2}\Gamma
\]
for every \(\Gamma\in C^{1,\alpha}(\T)\). Applying this to \(\Gamma_\ast\), we obtain
\[
\pv\!\int_{-\pi}^{\pi}\frac{\Gamma_\ast'(y)}{x-y}\,dy
=
\pi(\lambda_0+\lambda_2x^2),
\qquad x\in(-\pi,\pi).
\]

Now define the rescaled function
\[
\gamma_\ast(t):=\Gamma_\ast(\pi t),
\qquad t\in(-1,1).
\]
Then \(\gamma_\ast\in C^{1,\alpha}(-1,1)\) and
\[
\gamma_\ast'(t)=\pi\Gamma_\ast'(\pi t).
\]
Performing the change of variables \(x=\pi t\) and \(y=\pi s\), we obtain
\[
\frac1\pi
\pv\!\int_{-1}^{1}\frac{\gamma_\ast'(s)}{t-s}\,ds
=
\pi\lambda_0+\pi^3\lambda_2t^2.
\]
Multiplying by \(\pi\) and absorbing constants into new coefficients, we arrive at
\[
\pv\!\int_{-1}^{1}\frac{\gamma_\ast'(s)}{t-s}\,ds
=
\mu_0+\mu_2t^2,
\qquad t\in(-1,1).
\]
Renaming \(\gamma_\ast\) as \(\Gamma_\ast\) and \(\mu_0,\mu_2\) as
\(\lambda_0,\lambda_2\), we obtain \eqref{eq:EL-classical-final}.
\end{proof}

We may now solve the singular integral equation explicitly.

\begin{theorem}\label{thm:EL-general-solution}
Let \(\Gamma_\ast\) be the unique minimiser. Then there exist constants \(c_0,a,b\in\R\) such that
\[
\Gamma_\ast(x)=c_0+(a+bx^2)\sqrt{1-x^2},
\qquad x\in(-1,1).
\]
\end{theorem}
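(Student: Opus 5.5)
Starting from Proposition~\ref{prop:EL-classical-final}, we know that $\Gamma_\ast\in C^{1,\alpha}(-1,1)$ solves
\[
\pv\!\int_{-1}^{1}\frac{\Gamma_\ast'(y)}{x-y}\,dy=\lambda_0+\lambda_2x^2,\qquad x\in(-1,1).
\]
Set $u:=\Gamma_\ast'$. This is an airfoil (finite Hilbert transform) equation for $u$ with a polynomial right-hand side. We will invert it with Tricomi's formula. The general solution in the class of functions integrable near the endpoints is
\[
u(x)=\frac{1}{\pi^2\sqrt{1-x^2}}\pv\!\int_{-1}^1\frac{\sqrt{1-y^2}\,(\lambda_0+\lambda_2y^2)}{y-x}\,dy+\frac{C}{\sqrt{1-x^2}},
\]
up to the sign convention for the kernel $1/(x-y)$ versus $1/(y-x)$. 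The free constant $C$ reflects the one-dimensional kernel of the finite Hilbert transform, which is spanned by $(1-x^2)^{-1/2}$.

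Next we evaluate the principal value integral explicitly, using the standard identities
\[
\frac1\pi\pv\!\int_{-1}^1\frac{\sqrt{1-y^2}}{y-x}\,dy=-x,\qquad
\frac1\pi\pv\!\int_{-1}^1\frac{y^n\sqrt{1-y^2}}{y-x}\,dy=x\cdot\frac1\pi\pv\!\int_{-1}^1\frac{y^{n-1}\sqrt{1-y^2}}{y-x}\,dy+\frac1\pi\int_{-1}^1y^{n-1}\sqrt{1-y^2}\,dy.
\]
Equivalently, one can use the Chebyshev relation $\pv\int_{-1}^1\frac{\sqrt{1-y^2}\,U_{n-1}(y)}{y-x}\,dy=-\pi T_n(x)$. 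Applying the recursion to $\lambda_0+\lambda_2y^2$ gives an odd cubic $p(x)=\alpha_1x+\alpha_3x^3$. Hence
\[
u(x)=\frac{\alpha_1x+\alpha_3x^3+C}{\sqrt{1-x^2}}.
\]

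We then integrate $u$. Note that $\frac{d}{dx}\bigl[(a+bx^2)\sqrt{1-x^2}\bigr]=\frac{2bx(1-x^2)-x(a+bx^2)}{\sqrt{1-x^2}}=\frac{(2b-a)x-3bx^3}{\sqrt{1-x^2}}$. Matching coefficients, we choose $b=-\alpha_3/3$ and $a=2b-\alpha_1$, and this determines $a,b$ uniquely from $\alpha_1,\alpha_3$. The remaining term $C/\sqrt{1-x^2}$ integrates to $C\arcsin x$.

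The main obstacle is to eliminate this $\arcsin$ term, that is, to show $C=0$. The plan is to use symmetry and uniqueness. The constraints $\ell_0,\ell_2$ and the functional $J$ are invariant under $x\mapsto-x$, so $\Gamma_\ast(-x)$ is also a minimiser. By Theorem~\ref{thm:exist-unique}, $\Gamma_\ast$ is therefore even, $u$ is odd, and the even term $C/\sqrt{1-x^2}$ must vanish. As a cross-check, the regularity $\Gamma_\ast\in C^{1,\alpha}$ up to the endpoints (Theorem~\ref{thm:regularity}) already forbids $u\sim C(1-x^2)^{-1/2}$ unless the numerator vanishes at $\pm1$. This boundedness condition could alternatively be used to pin down $C$, though it interacts awkwardly with the $\sqrt{1-x^2}$ terms, since their derivatives also blow up at $\pm1$. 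So we rely on the parity argument and treat the boundary behaviour only as a consistency remark. Integrating $u$ then gives $\Gamma_\ast(x)=c_0+(a+bx^2)\sqrt{1-x^2}$, where $c_0$ is the constant of integration. This constant is not fixed by the equation, which only involves $\Gamma_\ast'$.
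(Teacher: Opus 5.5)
Relative to the results the paper establishes before this theorem (in particular Proposition~\ref{prop:EL-classical-final}), your argument is sound. It runs parallel to the paper's proof except at one point. Both proofs apply Tricomi's inversion, evaluate the finite Hilbert transforms of $\sqrt{1-y^2}$ and $y^2\sqrt{1-y^2}$, and integrate. Your recursion reproduces the paper's identities $x$ and $x^3-\tfrac x2$ once the kernel sign is matched. Your explicit matching $b=-\alpha_3/3$, $a=2b-\alpha_1$ makes precise what the paper does by ``absorbing constants''.

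The genuine difference is how the $\arcsin$ term is eliminated. The paper uses periodicity: $\Gamma_\ast$ is continuous on $\T$, so after rescaling its limits at $x=\pm1$ coincide. By contrast, $c_0+c_1\arcsin x+(a+bx^2)\sqrt{1-x^2}$ tends to $c_0\pm c_1\pi/2$ there. You instead note that $J$, $\ell_0$, $\ell_2$ are invariant under $x\mapsto -x$. Uniqueness (Theorem~\ref{thm:exist-unique}) then makes $\Gamma_\ast$ even, so $\Gamma_\ast'$ is odd and the even term $C(1-x^2)^{-1/2}$ must vanish. Your route needs no endpoint information and yields evenness of the minimiser as a by-product. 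The paper's route does not invoke uniqueness, but it relies on continuity across the seam $\pm\pi$ and on $(a+bx^2)\sqrt{1-x^2}$ vanishing at $\pm1$.

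Your hesitation about the boundedness argument is unfounded. Since $\Gamma_\ast'$ is bounded, the single numerator $C+\alpha_1x+\alpha_3x^3$ must vanish at $x=\pm1$, which gives $C=0$ and $\alpha_1+\alpha_3=0$ at once.

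Following this consistency check through, however, exposes a problem in the input you share with the paper. The relation $\alpha_1+\alpha_3=0$ means $a+b=0$, so $\Gamma_\ast=c_0+a(1-x^2)^{3/2}$. Its derivative is only $\tfrac12$-H\"older at $\pm1$. Combined with $\Gamma_\ast\in C^{1,\alpha}$ for every $\alpha<1$ (Theorem~\ref{thm:regularity}), this forces $a=0$, i.e.\ a constant minimiser. That is incompatible with the constraints unless $M_0=\pi^2L_0/3$.

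The culprit is the identification $\pv\int_{-\pi}^{\pi}\Gamma'(y)(x-y)^{-1}\,dy=\pi(-\Delta)^{1/2}\Gamma(x)$ used in Proposition~\ref{prop:EL-classical-final}. For $\Gamma=\cos x$ at $x=0$ the left side equals $2\int_0^\pi\frac{\sin y}{y}\,dy\neq\pi$. The periodic kernel with symbol $\pi|k|$ is $\tfrac12\cot\tfrac{x-y}{2}$, not $(x-y)^{-1}$. So your proof is exactly as good as the paper's: both are conditional on that proposition.
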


\begin{proof}
Define
\[
g(x):=\lambda_0+\lambda_2x^2.
\]
Then \eqref{eq:EL-classical-final} can be written as
\[
\mathcal H(\Gamma_\ast')(x)=\frac1\pi g(x),
\qquad x\in(-1,1),
\]
where \(\mathcal H\) denotes the finite Hilbert transform
\[
(\mathcal Hf)(x):=\frac1\pi\pv\!\int_{-1}^{1}\frac{f(y)}{x-y}\,dy.
\]

We now use Tricomi's inversion formula \cite{KingHilbert}. If \(h\in L^{p'}(-1,1)\) with \(p'>1\), then every
\(L^p\)-solution \(f\) of \(\mathcal Hf=h\) has the form
\[
f(x)=\frac{c}{\sqrt{1-x^2}}
-\frac{1}{\pi\sqrt{1-x^2}}
\pv\!\int_{-1}^{1}\frac{\sqrt{1-y^2}\,h(y)}{x-y}\,dy
\]
for some constant \(c\in\R\).

We apply this with \(h=\frac1\pi g\). Since \(g\) is a polynomial,
\(h\in L^{p'}(-1,1)\) for every \(p'>1\). Moreover, by
Proposition~\ref{prop:EL-classical-final},
\[
\Gamma_\ast\in C^{1,\alpha}(-1,1)
\qquad\text{for every }\alpha<1,
\]
and hence \(\Gamma_\ast'\in L^p(-1,1)\) for every \(1\le p<\infty\). Therefore Tricomi's
formula applies to \(f=\Gamma_\ast'\), yielding
\begin{equation}\label{eq:Gamma-prime-form}
\Gamma_\ast'(x)=\frac{c}{\sqrt{1-x^2}}
-\frac{1}{\pi^2\sqrt{1-x^2}}
\pv\!\int_{-1}^{1}\frac{\sqrt{1-y^2}\,g(y)}{x-y}\,dy.
\end{equation}

Using the standard identities for the finite Hilbert transform,
\[
\mathcal H(\sqrt{1-x^2})=x,
\qquad
\mathcal H(x^2\sqrt{1-x^2})=x^3-\frac{x}{2},
\]
we evaluate the integral term in \eqref{eq:Gamma-prime-form} and obtain
\[
\Gamma_\ast'(x)
=
\frac{c}{\sqrt{1-x^2}}
-\frac{1}{\pi\sqrt{1-x^2}}
\Bigl(\lambda_0x+\lambda_2\bigl(x^3-\tfrac{x}{2}\bigr)\Bigr).
\]
Absorbing constants into new coefficients, this becomes
\[
\Gamma_\ast'(x)=\frac{c_1}{\sqrt{1-x^2}}+\frac{ax+bx^3}{\sqrt{1-x^2}}.
\]

Integrating on \((-1,1)\), we obtain
\[
\Gamma_\ast(x)
=
c_0+c_1\arcsin(x)+(a+bx^2)\sqrt{1-x^2}.
\]

It remains to rule out the \(\arcsin\)-term. Since the minimiser arises by rescaling a periodic
function on \(\T\), its endpoint values agree after transport from \((-\pi,\pi)\) to \((-1,1)\).
On the other hand,
\[
\arcsin(1)\neq\arcsin(-1).
\]
Therefore the term \(c_1\arcsin(x)\) cannot occur, and hence \(c_1=0\). This proves that
\[
\Gamma_\ast(x)=c_0+(a+bx^2)\sqrt{1-x^2}.
\]
\end{proof}

\begin{corollary}\label{cor:regularity-minimiser}
Let \(\Gamma_\ast\) be the minimiser. Then
\[
\Gamma_\ast\in C^\infty(-1,1).
\]
\end{corollary}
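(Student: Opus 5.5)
The corollary follows directly from the explicit representation in Theorem~\ref{thm:EL-general-solution}. On \((-1,1)\), the minimiser (rescaled to the interval) coincides with
\[
\Gamma_\ast(x)=c_0+(a+bx^2)\sqrt{1-x^2},
\]
for suitable constants \(c_0,a,b\in\R\). So it suffices to show that the right-hand side is \(C^\infty\) on the open interval. No further analysis of the Euler--Lagrange equation is needed, and I will not invoke the torus regularity of Theorem~\ref{thm:regularity}. That result only gives \(H^s\) for \(s<5/2\), and the loss of smoothness there comes from the periodic extension at \(\pm\pi\), which corresponds to the endpoints \(\pm1\) after rescaling.

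The key steps are as follows.

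\begin{enumerate}
\item The polynomial \(x\mapsto c_0+a+bx^2\) part is trivially smooth on \(\R\). More precisely, \(c_0\) and \(a+bx^2\) are polynomials, hence \(C^\infty\).
\item For \(x\in(-1,1)\), the quantity \(1-x^2\) lies in \((0,1]\), so it stays in \((0,\infty)\), where \(t\mapsto\sqrt t\) is real-analytic. By the chain rule, \(x\mapsto\sqrt{1-x^2}\) is \(C^\infty\) on \((-1,1)\). One can even write down its derivatives: each is a rational expression in \(x\) and \((1-x^2)^{1/2-m}\).
\item Products and sums of \(C^\infty\) functions are \(C^\infty\), so the whole expression is smooth on \((-1,1)\).
\end{enumerate}

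The only point needing care is the identification step. Theorem~\ref{thm:EL-general-solution} is stated for the rescaled minimiser as an identity for every \(x\in(-1,1)\), not merely almost everywhere. This holds because both sides are continuous: \(\Gamma_\ast\in C^{1,\alpha}\) by Proposition~\ref{prop:EL-classical-final}, and the formula is continuous. Hence \(\Gamma_\ast\) equals the smooth function pointwise and inherits its smoothness.

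I would conclude with a remark on what the statement does not claim. Smoothness does not extend to the closed interval \([-1,1]\) unless \(a+b=0\), because the derivative of \(\sqrt{1-x^2}\) blows up at \(\pm1\). This is why the corollary is stated on the open interval.
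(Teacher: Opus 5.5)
Your argument is correct and is exactly the paper's proof: read off $\Gamma_\ast(x)=c_0+(a+bx^2)\sqrt{1-x^2}$ from Theorem~\ref{thm:EL-general-solution} and observe that each term is smooth on the open interval. Your pointwise-identification check is harmless, since that theorem is already stated for every $x\in(-1,1)$. One small correction to your closing aside: $a+b=0$ does not restore smoothness up to $\pm1$, because then $\Gamma_\ast=c_0-b(1-x^2)^{3/2}$, which is $C^1$ but not $C^2$ on $[-1,1]$ unless $b=0$.
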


\begin{proof}
By Theorem~\ref{thm:EL-general-solution},
\[
\Gamma_\ast(x)=c_0+(a+bx^2)\sqrt{1-x^2}.
\]
Each term in this expression is smooth on the open interval \((-1,1)\).
\end{proof}

\begin{remark}
There is no contradiction between Theorem~\ref{thm:regularity} and
Corollary~\ref{cor:regularity-minimiser}. On the torus, the function \(x^2\) is interpreted as a
\(2\pi\)-periodic function and fails to be smooth at the identification points \(\pm\pi\). One
therefore cannot expect the minimiser to be smooth on \(\T\). After rescaling to the open
interval \((-1,1)\), however, the minimiser is given by an explicit formula and is smooth away
from the endpoints.
\end{remark}

\begin{remark}
The three parameters in the general solution correspond naturally to the three conditions in Prandtl's formulation: the tip condition \(\Gamma_\ast(\pm1)=0\), the prescribed total lift, and
the prescribed second moment. The tip condition removes the constant term, while the remaining two constraints determine \(a\) and \(b\). Thus the variational problem in \(H^{1/2}\) rigorously recovers Prandtl's classical bell-shaped circulation law.
\end{remark}

\begin{corollary}
If the physical tip condition \(\Gamma_\ast(\pm1)=0\) is imposed, then
\[
\Gamma_\ast(x)=(a+bx^2)\sqrt{1-x^2}.
\]
In particular, the variational problem in \(H^{1/2}\) recovers Prandtl's classical bell-shaped circulation profile.
\end{corollary}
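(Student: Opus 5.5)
This is a direct consequence of Theorem~\ref{thm:EL-general-solution}, so I will specialise that formula rather than return to the variational problem. By Theorem~\ref{thm:EL-general-solution} there are constants \(c_0,a,b\in\R\) with
\[
\Gamma_\ast(x)=c_0+(a+bx^2)\sqrt{1-x^2},\qquad x\in(-1,1).
\]
The task is to show that the tip condition \(\Gamma_\ast(\pm1)=0\) forces \(c_0=0\).

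The first step is to make sense of the endpoint values. The solution lives on the open interval, but the right-hand side has an obvious continuous extension to \([-1,1]\). Continuity of \(\Gamma_\ast\) on \([-1,1]\) also follows from Theorem~\ref{thm:regularity}, since the rescaled periodic function is \(C^{1,\alpha}\). The tip condition is therefore read as a statement about this continuous extension: \(\lim_{x\to\pm1}\Gamma_\ast(x)=0\).

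The second step is to compute the endpoint value. Since \(\sqrt{1-x^2}\to0\) as \(x\to\pm1\) and \(a+bx^2\) stays bounded, we have \(\Gamma_\ast(\pm1)=c_0\). Imposing \(\Gamma_\ast(\pm1)=0\) then gives \(c_0=0\), and the claimed form \(\Gamma_\ast(x)=(a+bx^2)\sqrt{1-x^2}\) follows. Only one endpoint is actually needed; the two endpoint conditions are consistent with each other because the formula is even.

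The third step, for completeness, is to note that \(a\) and \(b\) are still fixed by the two constraints. Rescaled to \((-1,1)\), these become
\[
\int_{-1}^1(a+bx^2)\sqrt{1-x^2}\,dx=\tfrac{\pi}{2}a+\tfrac{\pi}{8}b,
\qquad
\int_{-1}^1x^2(a+bx^2)\sqrt{1-x^2}\,dx=\tfrac{\pi}{8}a+\tfrac{\pi}{16}b.
\]
This is a \(2\times2\) linear system with determinant \(\pi^2/32-\pi^2/64\neq0\), so \(a\) and \(b\) are uniquely determined by \(L_0\) and \(M_0\). This yields the bell-shaped profile.

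The main obstacle is conceptual rather than computational. In \(H^{1/2}\) there is no trace, and the minimiser of Theorem~\ref{thm:exist-unique} need not satisfy the tip condition. The statement should therefore be read as conditional: if the minimiser satisfies \(\Gamma_\ast(\pm1)=0\), or if \(\Gamma_\ast\) is restricted to the subclass of profiles of the form in Theorem~\ref{thm:EL-general-solution} that vanish at the tips, then \(c_0=0\). The endpoint value itself is meaningful here because of the \(C^{1,\alpha}\) regularity of Theorem~\ref{thm:regularity}.
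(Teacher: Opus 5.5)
Your argument is the same as the paper's. The corollary is read off from Theorem~\ref{thm:EL-general-solution} by evaluating at \(x=\pm1\) to force \(c_0=0\), and the lift and second-moment constraints then fix \(a,b\) as in the remark preceding it; relative to that theorem, your proof is correct.

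One caution, since you explicitly appeal to \(C^{1,\alpha}\) regularity up to the tips. The formula gives \(\sqrt{1-x^2}\,\Gamma_\ast'(x)\to\mp(a+b)\) as \(x\to\pm1\), and your \(2\times2\) system yields \(a+b=0\) only when \(\int_{-1}^1x^2\Gamma_\ast\,dx=\tfrac16\int_{-1}^1\Gamma_\ast\,dx\). Even then, \(a(1-x^2)^{3/2}\) is not \(C^{1,\alpha}\) for \(\alpha>\tfrac12\). So Theorems~\ref{thm:regularity} and~\ref{thm:EL-general-solution} are mutually inconsistent unless \(\Gamma_\ast\) is constant, and your deduction, like the paper's, inherits that defect rather than resolving it.
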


\begin{figure}[h]
\centering
\begin{tikzpicture}
\begin{axis}[
    axis lines=middle,
    xmin=-1.1, xmax=1.1,
    ymin=0, ymax=1.1,
    samples=200,
    xlabel={$x$},
    ylabel={$\Gamma(x)$},
    width=9cm,
    height=6cm
]
\addplot[thick,domain=-1:1] {(1-x^2)*sqrt(1-x^2)};
\end{axis}
\end{tikzpicture}
\caption{A representative bell-shaped circulation distribution.}
\label{fig:bellshape}
\end{figure}

\section*{Statements and Declarations}
The authors declare that they do not have competing interests. 

\end{document}